\newtheorem{lemma}{Lemma}[section]
\newtheorem{prop}[lemma]{Proposition}
\newtheorem{thm}[lemma]{Theorem}
\newtheorem{cor}[lemma]{Corollary}
\theoremstyle{definition}
\newtheorem{conjecture}[lemma]{Conjecture}
\theoremstyle{remark}
\newtheorem{remark}[lemma]{Remark}
\numberwithin{equation}{section} \numberwithin{table}{section}
\begin{document}

\title[Approximation properties of $\beta$-expansions II]{Approximation properties of $\beta$-expansions II}
\author{Simon Baker}
\address{
School of Mathematics, The University of Manchester,
Oxford Road, Manchester M13 9PL, United Kingdom. E-mail:
simonbaker412@gmail.com}

\date{\today}
\subjclass[2010]{11A63, 37A45}
\keywords{Beta-expansions, Diophantine approximation, Bernoulli convolutions}

\begin{abstract}
Given $\beta\in(1,2)$ and $x\in[0,\frac{1}{\beta-1}]$, a sequence $(\epsilon_{i})_{i=1}^{\infty}\in\{0,1\}^{\mathbb{N}}$ is called a $\beta$-expansion for $x$ if $$x=\sum_{i=1}^{\infty}\frac{\epsilon_{i}}{\beta^{i}}.$$ In a recent article the author studied the quality of approximation provided by the finite sums $\sum_{i=1}^{n}\epsilon_{i}\beta^{-i}$ \cite{Bak}. In particular, given $\beta\in(1,2)$ and $\Psi:\mathbb{N}\to\mathbb{R}_{\geq 0},$ we associate the set $$W_{\beta}(\Psi):=\bigcap_{m=1}^{\infty}\bigcup_{n=m}^{\infty}\bigcup_{(\epsilon_{i})_{i=1}^{n}\in\{0,1\}^{n}}\Big[\sum_{i=1}^{n}\frac{\epsilon_{i}}{\beta^{i}},\sum_{i=1}^{n}\frac{\epsilon_{i}}{\beta^{i}}+\Psi(n)\Big].$$ Alternatively, $W_{\beta}(\Psi)$ is the set of $x\in \mathbb{R}$ such that for infinitely many $n\in\mathbb{N},$ there exists a sequence $(\epsilon_{i})_{i=1}^{n}$ satisfying the inequalities
$$0\leq x-\sum_{i=1}^{n}\frac{\epsilon_{i}}{\beta^{i}}\leq \Psi(n).$$ If $\sum_{n=1}^{\infty}2^{n}\Psi(n)<\infty$ then $W_{\beta}(\Psi)$ has zero Lebesgue measure. We call a $\beta\in(1,2)$ approximation regular, if $\sum_{n=1}^{\infty}2^{n}\Psi(n)=\infty$ implies $W_{\beta}(\Psi)$ is of full Lebesgue measure within $[0,\frac{1}{\beta-1}]$. The author conjectured in \cite{Bak} that Lebesgue almost every $\beta\in(1,2)$ is approximation regular. In this paper we make a significant step towards proving this conjecture.

The main result of this paper is the following statement: given a sequence of positive real numbers $(\omega_{n})_{n=1}^{\infty},$ which satisfy $\lim_{n\to\infty} \omega_{n}=\infty$, then for Lebesgue almost every $\beta\in(1.497\ldots,2)$ the set $W_{\beta}(\omega_{n}\cdot 2^{-n})$ is of full Lebesgue measure within $[0,\frac{1}{\beta-1}]$. Here the sequence $(\omega_{n})_{n=1}^{\infty}$ should be interpreted as a sequence tending to infinity at a very slow rate.

We also study the case where $\sum_{n=1}^{\infty}2^{n}\Psi(n)<\infty$. Applying the mass transference principle developed by Beresnevich and Velani in \cite{BerVel}, we prove some results on the Hausdorff dimension and Hausdorff measure of $W_{\beta}(\Psi)$.

\end{abstract}

\maketitle

\section{Introduction}
Expansions in non-integer bases were pioneered in the late 1950's with the papers of Parry \cite{Parry} and Renyi \cite{Renyi}. Since then they have been studied by many authors and have connections with ergodic theory, fractal geometry, and number theory(see the survey articles \cite{Kom} and \cite{Sid2}). In this article we study expansions in non-integer bases from the perspective of Diophantine approximation and metric number theory.

Classical Diophantine approximation is concerned with the approximation properties of the rational numbers. These approximation properties are described via a limsup set in the following general way. Given $\Psi:\mathbb{N}\to\mathbb{R}_{\geq 0}$ we associate the set
\begin{align*}
J(\Psi):&=\bigcap_{n=1}^{\infty}\bigcup_{q=n}^{\infty}\bigcup_{p\in\mathbb{Z}}\Big[\frac{p}{q}-\Psi(q),\frac{p}{q}+\Psi(q)\Big]\\
&=\Big\{x\in\mathbb{R}: \Big|x-\frac{p}{q}\Big|\leq \Psi(q)\, \textrm{for infinitely many } (p,q)\in\mathbb{Z}\times\mathbb{N}\Big\}.
\end{align*}A well known theorem due to Khintchine states that if $\Psi$ is a non-increasing function and $\sum_{q=1}^{\infty}q\psi(q)=\infty$, then almost every $x\in\mathbb{R}$ is contained in $J(\Psi)$ \cite{Khit}. By the Borel-Cantelli lemma, if $\sum_{q=1}^{\infty}q\psi(q)<\infty$ then $J(\Psi)$ has zero Lebesgue measure. In a significant paper Duffin and Schaeffer showed that in the statement of Khitchine's theorem, it is not possible to remove all monotonicity assumptions on $\Psi$ \cite{DufSch}. They constructed a function $\Psi:\mathbb{N}\to\mathbb{R}_{\geq 0}$ for which $\sum_{q=1}^{\infty} q\Psi(q)=\infty,$ yet $J(\Psi)$ has zero Lebesgue measure. In the statement of Khintchine's theorem, almost every is meant with respect to the Lebesgue measure. Throughout this paper whenever we use the phrase ``almost every," we always mean with respect to the Lebesgue measure. We denote the Lebesgue measure by $\lambda(\cdot)$.

This article is motivated by the following question: $$\textrm{Does an analogue of Khintchine's theorem hold within expansions in non-integer bases?}$$ Before stating the appropriate analogue of Khintchine's theorem, it is necessary to recall some of the theory behind expansions in non-integer bases.

\subsection{Expansions in non-integer bases}
Let $\beta\in(1,2)$ and $I_{\beta}:=[0,\frac{1}{\beta-1}].$ Given $x\in I_{\beta}$ we call a sequence $(\epsilon_{i})_{i=1}^{\infty}\in\{0,1\}^{\mathbb{N}}$ a \emph{$\beta$-expansion} for $x$ if $$x=\sum_{i=1}^{\infty}\frac{\epsilon_{i}}{\beta^{i}}.$$ Despite being a simple generalisation of the well known integer base expansions, $\beta$-expansions exhibit very different behaviour. As an example, a well known theorem of Sidorov \cite{Sid} states that for any $\beta\in(1,2),$ almost every $x\in I_{\beta}$ has a continuum of $\beta$-expansions. This is of course completely different to the usual integer base expansions, where every number has a unique expansion except for a countable set of exceptions which have precisely two.

Given $\beta\in(1,2)$ and a sequence $(\epsilon_{i})_{i=1}^{n}\in\{0,1\}^{n},$ we call the number $\sum_{i=1}^{n}\epsilon_{i}\beta^{-i}$ the \emph{level $n$ sum corresponding to $(\epsilon_{i})_{i=1}^{n}\in\{0,1\}^{n}.$} Moreover, given $x\in I_{\beta}$ we call a sequence $(\epsilon_{i})_{i=1}^{n}\in\{0,1\}^{n}$ an \emph{$n$ prefix} for $x,$ if there exists $(\epsilon_{n+i})_{i=1}^{\infty}$ such that
\begin{equation}
\label{prefix equation}
x=\sum_{i=1}^{n}\frac{\epsilon_{i}}{\beta^{i}}+\sum_{i=1}^{\infty}\frac{\epsilon_{n+i}}{\beta^{n+i}}.
\end{equation} In other words, the sequence $(\epsilon_{i})_{i=1}^{n}$ is an $n$ prefix for $x$ if it can be extended to form a $\beta$-expansion of $x$. When a sequence $(\epsilon_{i})_{i=1}^{n}$ satisfies (\ref{prefix equation}), if there is no confusion we may refer to both the sequence $(\epsilon_{i})_{i=1}^{n}$ and the number $\sum_{i=1}^{n}\epsilon_{i}\beta^{-i}$ as an $n$ prefix for $x$. The building blocks of a $\beta$-expansion are the prefixes. As we will see below, the prefixes play the same role for us as the rational numbers do in traditional Diophantine approximation.

Given $\beta\in(1,2)$ and $\Psi:\mathbb{N}\to\mathbb{R}_{\geq 0}$ we associate the following limsup set: $$W_{\beta}(\Psi) :=\bigcap_{m=1}^{\infty}\bigcup_{n=m}^{\infty}\bigcup_{(\epsilon_{i})_{i=1}^{n}\in\{0,1\}^{n}}\Big[\sum_{i=1}^{n}\frac{\epsilon_{i}}{\beta^{i}},\sum_{i=1}^{n}\frac{\epsilon_{i}}{\beta^{i}}+\Psi(n)\Big].$$
Alternatively, $W_{\beta}(\Psi)$ is the set of $x\in \mathbb{R}$ such that for infinitely many $n\in\mathbb{N},$ there exists a level $n$ sum satisfying the inequalities
\begin{equation}
\label{approx equation}
0\leq x-\sum_{i=1}^{n}\frac{\epsilon_{i}}{\beta^{i}}\leq \Psi(n).
\end{equation} Our goal is to understand how well a generic $x\in I_{\beta}$ can be approximated by its prefixes. In (\ref{approx equation}) the approximation to $x$ is given by a level $n$ sum, not necessarily an $n$ prefix for $x$. However, as is explained in \cite{Bak} there is no loss of generality in assuming that the inequalities in (\ref{approx equation}) are satisfied by an $n$ prefix.

Our goal of understanding how well a generic $x\in I_{\beta}$ can be approximated by its prefixes, is in a sense equivalent to understanding how uniformly the level $n$ sums are distributed throughout $I_{\beta}$. If we can show that an optimal rate of approximation holds for a generic $x$, then the level $n$ sums should be distributed reasonably uniformly throughout $I_{\beta}$. Similarly, if the level $n$ sums are well distributed within $I_{\beta},$ we would expect to have good approximation properties. This behaviour was observed in \cite{Bak}. Understanding how the level $n$ sums are distributed within $I_{\beta}$ is a long standing and classical problem. For recent developments on this classical problem we refer the reader to an important paper by Hochman \cite{Hoc}, and the references therein. Given a $\beta\in(1,2),$ one method for understanding how these sums are distributed is to study the properties of the measure $\mu_{\beta},$ where $$\mu_{\beta}(E)= \mathbb{P}\Big(\Big\{(\epsilon_{i})_{i=1}^{\infty}\in\{0,1\}^{\mathbb{N}}: \sum_{i=1}^{\infty}\frac{\epsilon_{i}}{\beta^{i}}\in E\Big\}\Big),$$ for any Borel set $E\subseteq \mathbb{R}$. Here $\mathbb{P}$ is the $(1/2,1/2)$ probability measure on $\{0,1\}^{\mathbb{N}}.$ The measure $\mu_{\beta}$ is known as the \emph{Bernoulli convolution} with respect to $\beta$. If $\mu_{\beta}$ fails to be absolutely continuous with respect to the Lebesgue measure, then it is expected that the level $n$ sums will be distributed in a far from uniform way. This statement is supported by the fact that the only $\beta$ for which $\mu_{\beta}$ is known to not be absolutely continuous are the Pisot numbers, and in a Pisot base the level $n$ sums are poorly distributed. In a breakthrough paper, Solomyak showed that for almost every $\beta\in(1,2)$ the measure $\mu_{\beta}$ is absolutely continuous with respect to the Lebesgue measure \cite{Solomyak}. This was recently improved upon by Shmerkin who showed that the set of $\beta$ for which $\mu_{\beta}$ is not absolutely continuous has Hausdorff dimension zero \cite{Shm}. The study of the sets $W_{\beta}(\Psi)$ could be interpreted as an alternative method for understanding the distribution of the level $n$ sums.

As mentioned above, it is believed that if the level $n$ sums are well distributed then $\mu_{\beta}$ will be absolutely continuous. The author expects that a similar property would imply a Khintchine type result. The following definition introduces the set of $\beta\in(1,2)$ for which we have Khintchine type behaviour. We call a $\beta\in(1,2)$ \emph{approximation regular}, if $\sum_{n=1}^{\infty}2^{n}\Psi(n)=\infty$ implies $W_{\beta}(\Psi)$ is of full measure within $I_{\beta}$. Note that if $\sum_{n=1}^{\infty}2^{n}\Psi(n)<\infty,$ then $W_{\beta}(\Psi)$ will always have zero Lebesgue measure by the Borel-Cantelli lemma. For a generic $\beta$ we expect the level $n$ sums to be well distributed within $I_{\beta},$ as such we made the following conjecture in \cite{Bak}.

\begin{conjecture}
\label{conjecture 1}
Almost every $\beta\in(1,2)$ is approximation regular.
\end{conjecture}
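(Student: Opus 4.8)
The plan is to realize $W_{\beta}(\Psi)$ as a limsup set $\limsup_{n} A_{n}$, where $A_{n}=\bigcup_{(\epsilon_{i})_{i=1}^{n}\in\{0,1\}^{n}}[S_{n},S_{n}+\Psi(n)]$ and $S_{n}=\sum_{i=1}^{n}\epsilon_{i}\beta^{-i}$, and to attack the divergence half through a quasi-independence (second Borel--Cantelli) argument. The convergence half, $\sum_{n}2^{n}\Psi(n)<\infty\Rightarrow\lambda(W_{\beta}(\Psi))=0$, is already the elementary first Borel--Cantelli lemma, since $A_{n}$ is covered by $2^{n}$ intervals of length $\Psi(n)$. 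For the divergence half I would aim at a local estimate: a constant $C$ (depending on $\beta$ but not on $N$) such that for every subinterval $I\subseteq I_{\beta}$,
\[
\sum_{m,n=1}^{N}\lambda(A_{m}\cap A_{n}\cap I)\leq C\Big(\sum_{n=1}^{N}\lambda(A_{n}\cap I)\Big)^{2},
\]
together with $\sum_{n}\lambda(A_{n})=\infty$. A localized divergence Borel--Cantelli lemma (the local ubiquity framework underlying \cite{BerVel}) then forces $\lambda(W_{\beta}(\Psi)\cap I)>0$ for every $I$, which upgrades to full measure in $I_{\beta}$ by a Lebesgue density argument.

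The heart of the matter is therefore the two quantities $\lambda(A_{n})$ and the correlations $\lambda(A_{m}\cap A_{n})$, both governed by how the $2^{n}$ level $n$ sums are distributed throughout $I_{\beta}$. Here I would bring in the Bernoulli convolution $\mu_{\beta}$: the level $n$ sums are exactly the atoms of the $n$-th approximant $\mu_{\beta,n}$ to $\mu_{\beta}$, and $\mu_{\beta}=\mu_{\beta,n}*(\text{tail})$. For a single $\beta$ whose $\mu_{\beta}$ is absolutely continuous with controlled density one expects $\lambda(A_{n})\gtrsim\min\{1,2^{n}\Psi(n)\}$ and $\lambda(A_{m}\cap A_{n})\lesssim 2^{m}\Psi(m)\Psi(n)$, i.e. the sums behave as if spaced $\sim 2^{-n}$ apart. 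Since $\mu_{\beta}$ is absolutely continuous for almost every $\beta$ \cite{Solomyak}, this is the natural input, but a pointwise statement for a fixed $\beta$ will not suffice. Instead I would integrate the relevant quantities over $\beta$ and exploit the transversality of the maps $\beta\mapsto S_{n}(\epsilon)$ (the Solomyak--Peres--Schlag method) to extract the required correlation bounds for almost every $\beta$ simultaneously, via Fubini.

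The main obstacle is precisely this correlation bound: controlling the overlaps $\lambda(A_{m}\cap A_{n})$ amounts to ruling out clustering of the level $n$ sums on all scales and uniformly enough in $\beta$. This is the quantitative form of the classical and genuinely hard problem of how uniformly the level $n$ sums are distributed, the circle of ideas around \cite{Hoc}, and it is exactly where Pisot and other arithmetically special bases misbehave. I expect the transversality estimates to be strong enough only on a restricted range of bases and for $\Psi(n)$ comparable to the critical spacing $2^{-n}$: taking $\Psi(n)=\omega_{n}2^{-n}$ with $\omega_{n}\to\infty$ makes $\sum_{n}2^{n}\Psi(n)=\sum_{n}\omega_{n}=\infty$ automatically and, crucially, leaves a slowly growing factor $\omega_{n}$ of slack to absorb the non-uniformity of the sums, while the lower cutoff $\beta>1.497\ldots$ marks where the available transversality and separation constants become usable. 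This route would yield the main theorem of the paper as a special case; removing the slack (arbitrary $\Psi$) and extending to all of $(1,2)$, so as to reach the full conjecture, would require genuinely new distributional information about the level $n$ sums near the exceptional bases.
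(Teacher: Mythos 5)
The statement you are asked to prove is a conjecture: the paper does not prove it and explicitly identifies it as open, noting a dense set of exceptional $\beta$ (those satisfying a height one polynomial). What the paper actually establishes (Theorem \ref{Main theorem}) is the special case $\Psi(n)=\omega_{n}2^{-n}$ with $\omega_{n}\to\infty$, restricted to $\beta\in(1.497\ldots,2)$. Your proposal, as you concede in your final paragraph, is a program that at best recovers that special case; it does not prove the conjecture. The ingredients you name --- pair-correlation control of the level $n$ sums, integration over $\beta$ with Fubini, transversality for $\beta>1.497\ldots$, and a Lebesgue density argument --- are indeed the ones the paper uses for Theorem \ref{Main theorem}, via Theorem 2.1 of Benjamini--Solomyak. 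But the way you propose to assemble them has concrete gaps beyond the one you flag.

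First, the quasi-independence inequality $\sum_{m,n\leq N}\lambda(A_{m}\cap A_{n}\cap I)\leq C\big(\sum_{n\leq N}\lambda(A_{n}\cap I)\big)^{2}$ requires control of \emph{cross-level} correlations $\lambda(A_{m}\cap A_{n})$ for $m\neq n$, and also a \emph{lower} bound $\lambda(A_{n})\gtrsim\min\{1,2^{n}\Psi(n)\}$ for the fixed $\beta$ under consideration. Neither is supplied by the available transversality machinery: the Benjamini--Solomyak estimate bounds the number of close pairs \emph{within a single level} $n$, and it is a bound on the \emph{integral over} $\beta$, not a pointwise one. By Chebyshev it yields, for each fixed $n$, a large-measure set of $\beta$ for which most level $n$ sums are well separated; it does not yield a single full-measure set of $\beta$ good for all (or even infinitely many) $n$ with uniform constants, which is what a second Borel--Cantelli argument at fixed $\beta$ needs. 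This is precisely why the paper proves Theorem \ref{Main theorem} by contradiction, extracting a \emph{different} $\beta_{N}$ at each scale $N$ from the hypothetical positive-measure exceptional set, and why the slack $\omega_{n}\to\infty$ is needed to absorb the loss from rescaling by $\beta^{-M}$ rather than $2^{-M}$ when localizing the separated sums near a density point. Second, even granting all of that, the genuinely open case --- $\sum 2^{n}\Psi(n)=\infty$ with $2^{n}\Psi(n)\to 0$ --- is exactly where $\lambda(A_{n})$ can be much smaller than $2^{n}\Psi(n)$ unless essentially all $2^{n}$ level $n$ sums are pairwise separated by $\gg\Psi(n)$; no known result gives this for almost every $\beta$, and the two-sided pair-correlation conjecture of Benjamini--Solomyak that would address it is itself open. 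So your outline correctly identifies the shape of the problem but does not close it: as a proof of the conjecture it is incomplete, and even as a proof of Theorem \ref{Main theorem} it would still need the contradiction and density-point mechanism (or an equivalent device) to convert the integrated estimates into a statement holding for almost every $\beta$.
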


Conjecture \ref{conjecture 1} is our analogue of Khintchine's theorem within expansions in non-integer bases. Note that any $\beta$ which satisfies a height one polynomial cannot be approximation regular. As such, there is a dense set of exceptions to Conjecture \ref{conjecture 1}. In \cite{Bak} we were able to verify Conjecture \ref{conjecture 1} for a class of algebraic integers known as \emph{Garsia numbers}. These numbers are the positive real algebraic integers with norm $\pm 2,$ whose conjugates are all of modulus strictly greater than $1.$ In particular the main result of \cite{Bak} is the following.
\begin{thm}
\label{Garsia thm}
All Garsia numbers are approximation regular.
\end{thm}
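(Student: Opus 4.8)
The plan is to show that for any Garsia number $\beta$ and any $\Psi$ with $\sum_{n=1}^{\infty}2^{n}\Psi(n)=\infty$, the limsup set $W_{\beta}(\Psi)$ has full measure in $I_{\beta}$. The natural strategy is a quantitative Borel--Cantelli argument: one needs a \emph{divergence} Borel--Cantelli lemma, which requires not only that $\sum_{n}\lambda(A_{n})=\infty$ (where $A_{n}$ is the union of level $n$ intervals of length $\Psi(n)$) but also a quasi-independence estimate of the form $\sum_{n,k\leq N}\lambda(A_{n}\cap A_{k})\ll \big(\sum_{n\leq N}\lambda(A_{n})\big)^{2}$. The hypothesis $\sum_{n}2^{n}\Psi(n)=\infty$ is exactly what guarantees divergence of $\sum_{n}\lambda(A_{n})$, provided the $2^{n}$ level $n$ sums are spread out enough that their $\Psi(n)$-neighbourhoods are essentially disjoint; so the first task is to control the overlaps among level $n$ sums at a single scale $n$.

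First I would exploit the defining property of a Garsia number. Because $\beta$ is a positive real algebraic integer of norm $\pm 2$ whose conjugates all lie strictly outside the unit circle, distinct level $n$ sums are separated by at least $c\beta^{-n}$ for some constant $c=c(\beta)>0$. This separation property is the arithmetic heart of the argument: it is what forces the $2^{n}$ level $n$ sums to be comparably well distributed across $I_{\beta}$, so that their number times their separation is bounded below, and the $\Psi(n)$-neighbourhoods do not collapse onto one another. Using this I would establish a clean lower bound $\lambda(A_{n})\gg \min\{1,\,2^{n}\Psi(n)\}$ and, crucially, show that the family $(A_{n})$ is \emph{locally ubiquitous} in the sense of the ubiquity framework, i.e. the level $n$ sums resolve $I_{\beta}$ at scale $\beta^{-n}$.

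Next I would pass from the separation estimate to the required second-moment bound. The overlap $\lambda(A_{n}\cap A_{k})$ for $n<k$ is controlled by estimating how many level $k$ intervals can meet a given level $n$ interval; the Garsia separation at scale $\beta^{-k}$ together with the self-similar structure of the collection of level sums (each level $n$ sum extends in $2^{k-n}$ ways to a level $k$ sum, and these children inherit the spacing) yields a bound of the expected product form up to a constant. Assembling the single-scale lower bound and the cross-scale quasi-independence estimate, the divergence Borel--Cantelli lemma then gives $\lambda(\limsup A_{n}\cap I_{\beta})>0$, and a standard zero--one law or density argument upgrades positive measure to full measure in $I_{\beta}$.

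I expect the main obstacle to be proving and then correctly deploying the uniform separation estimate for Garsia numbers and converting it into the quasi-independence bound. While the norm and conjugate conditions guarantee that nonzero integer combinations $\sum_{i}(\epsilon_{i}-\epsilon_{i}')\beta^{-i}$ cannot be too small, making this quantitative and uniform in $n$---and, more delicately, controlling the joint distribution of level $n$ and level $k$ sums so that the overlaps genuinely factor---is the technical crux. The divergence half of Borel--Cantelli is notoriously sensitive to correlations, so the argument will succeed or fail precisely on whether the Garsia structure delivers a clean enough second-moment estimate to absorb the absence of monotonicity in $\Psi$.
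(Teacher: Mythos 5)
First, a structural point: this paper does not prove Theorem \ref{Garsia thm} at all --- it is quoted from \cite{Bak}, so there is no internal proof to compare against. That said, the proof in \cite{Bak} does follow the architecture you outline (a separation lemma for Garsia numbers, single-scale and cross-scale measure estimates, a divergence Borel--Cantelli lemma, and a density argument to upgrade positive measure to full measure), so your strategy is the right one. The genuine problem is in your key arithmetic input. You assert that distinct level $n$ sums of a Garsia number are separated by at least $c\beta^{-n}$. As stated this is impossible for large $n$: all $2^{n}$ level $n$ sums are distinct, and $2^{n}$ points in $I_{\beta}$ that are pairwise separated by $c\beta^{-n}$ would have to span an interval of length at least $c\,2^{n}\beta^{-n}\to\infty$. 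The correct Garsia separation lemma is that distinct level $n$ sums differ by at least $c\cdot 2^{-n}$: if $P$ is a nonzero polynomial with coefficients in $\{-1,0,1\}$ and degree less than $n$, its norm is a nonzero integer, each conjugate factor is $O(|\beta_{j}|^{n})$, and the product of the conjugate moduli is $2/\beta$ because the norm of $\beta$ is $\pm2$; hence $|P(\beta)|\gg(\beta/2)^{n}$ and the level $n$ sums are $c\,2^{-n}$-separated. This $2^{-n}$ scale is the entire content of the Garsia hypothesis --- it is exactly what matches the count $2^{n}$ and the weight $2^{n}\Psi(n)$ in the definition of approximation regularity. Once corrected, your estimates do come out: $\lambda(A_{n})=2^{n}\Psi(n)$ when $\Psi(n)\leq c2^{-n}$, and for $n<k$ the separation at level $k$ gives $\lambda(A_{n}\cap A_{k})\ll\lambda(A_{n})\lambda(A_{k})+2^{n-k}\lambda(A_{k})$, whose error term is summable, so quasi-independence holds.

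A second, smaller gap is the final upgrade from positive to full measure, which you dismiss as ``standard.'' The separation lemma only gives \emph{upper} bounds on the number of level $k$ sums in a subinterval $B$; to localise the divergence Borel--Cantelli argument to $B$ and then invoke a Lebesgue density argument, you need a \emph{lower} bound of order $2^{n}\lambda(B)$ on the number of level $n$ sums in $B$. Denseness of prefixes only yields about $\beta^{n}\lambda(B)$ of them, which is too few. This must be supplied separately, for instance by self-similarity: the level $m+k$ sums extending a fixed $m$ prefix form a translated copy of $\beta^{-m}A_{k}(\beta)$ sitting inside an interval of length comparable to $\beta^{-m}$, and these cylinder intervals form a Vitali-type family on which the local positive proportion can be made uniform. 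Without some such ingredient the argument stops at $\lambda(W_{\beta}(\Psi))>0$.
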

Note that Garsia showed that whenever $\beta$ is a Garsia number, the associate Bernoulli convolution $\mu_{\beta}$ is absolutely continuous with respect to the Lebesgue measure, and has bounded density \cite{Gar}. In this paper we make a significant step towards a proof of Conjecture \ref{conjecture 1}. Our main result is the following.

\begin{thm}
\label{Main theorem}
Let $(\omega_{n})_{n=1}^{\infty}$ be a sequence of real numbers that tend to infinity. Then for almost every $\beta\in(1.497\ldots,2)$ the set $W_{\beta}(\omega_{n}\cdot 2^{-n})$ is of full Lebesgue measure within $I_{\beta}$.
\end{thm}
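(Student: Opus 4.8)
The plan is to exploit the limsup structure of $W_\beta(\Psi)$ together with an averaging argument over the parameter $\beta$, the decisive analytic input being the transversality method of Solomyak \cite{Solomyak}, whose range of validity is exactly what forces the threshold $1.497\ldots$. For $\beta\in(1,2)$ and $n\in\mathbb{N}$ write
$$E_n(\beta):=\bigcup_{(\epsilon_i)_{i=1}^n\in\{0,1\}^n}\Big[\sum_{i=1}^n\frac{\epsilon_i}{\beta^i},\sum_{i=1}^n\frac{\epsilon_i}{\beta^i}+\Psi(n)\Big],\qquad \Psi(n)=\omega_n2^{-n},$$
so that $W_\beta(\Psi)=\limsup_n E_n(\beta)$. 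Since the complement of a limsup set obeys $\lambda\big(I_\beta\setminus W_\beta(\Psi)\big)\le\liminf_n\lambda\big(I_\beta\setminus E_n(\beta)\big)$, it suffices to prove that $\liminf_n\lambda\big(I_\beta\setminus E_n(\beta)\big)=0$ for almost every $\beta$. By Fatou's lemma this follows once I establish the single averaged covering estimate $\liminf_{n\to\infty}\int_{1.497\ldots}^{2}\lambda\big(I_\beta\setminus E_n(\beta)\big)\,d\beta=0$, reducing the metric statement over $\beta$ to one inequality.

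To control each $\lambda\big(I_\beta\setminus E_n(\beta)\big)$ I would run a second moment method in the variable $x$. Let $N_n(\beta,x)$ count the length-$n$ strings whose level $n$ sum lies in $[x-\Psi(n),x]$, so that $I_\beta\setminus E_n(\beta)=\{x:N_n(\beta,x)=0\}$ and, for the normalised measure on $I_\beta$, Chebyshev's inequality gives $\lambda\{N_n=0\}\le|I_\beta|\,\mathrm{Var}(N_n)/(\mathbb{E}N_n)^2$. The first moment is $\int_{I_\beta}N_n\,dx=2^n\Psi(n)=\omega_n\to\infty$, while the second moment
$$\int_{I_\beta}N_n(\beta,x)^2\,dx=\sum_{(\epsilon_i),(\epsilon_i')}\max\Big(0,\ \Psi(n)-\Big|\sum_{i=1}^n\frac{\epsilon_i-\epsilon_i'}{\beta^i}\Big|\Big)$$
is governed entirely by how often pairs of level $n$ sums cluster within $\Psi(n)$ of one another. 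The whole problem is thereby transferred to controlling these near-coincidences.

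This is precisely where I would integrate over $\beta$ and invoke transversality. For a pair of strings first differing in coordinate $k$, the difference $\sum_{i\ge k}(\epsilon_i-\epsilon_i')\beta^{-i}$ becomes, after the substitution $t=1/\beta$, a power series with coefficients in $\{-1,0,1\}$ and leading term $\pm t^{k}$. Solomyak's transversality estimate, valid exactly for $t$ below the threshold corresponding to $\beta>1.497\ldots$, shows that such a series cannot remain small on a large set of parameters, yielding a bound of the shape $\int\mathbf{1}\big[\,|\sum(\epsilon_i-\epsilon_i')\beta^{-i}|\le\Psi(n)\,\big]\,d\beta\lesssim\Psi(n)\beta^{k}$. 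Counting the $2^{2n-k}$ ordered pairs with first difference at coordinate $k$ and summing over $k$ then produces an explicit bound on the averaged second moment $\int\big(\int N_n^2\,dx\big)\,d\beta$, which I would feed into the Chebyshev estimate together with the divergence $\omega_n\to\infty$ of the first moment to drive the averaged covering integral towards zero along a subsequence.

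The main obstacle is that the plain variance in $x$ does \emph{not} vanish for a fixed $\beta$: the count $N_n(\beta,x)$ essentially tracks $\omega_n$ times the density of the Bernoulli convolution $\mu_\beta$, which genuinely fluctuates in $x$, so Chebyshev at a single scale cannot on its own force $\{N_n=0\}$ to be null. Overcoming this is the heart of the matter, and it is here that the limsup structure becomes essential: I only need a good covering for infinitely many $n$, not all large $n$, and the averaging over $\beta$ furnished by transversality is what converts the non-vanishing pointwise variance into a statement valid for almost every individual $\beta$ after Fatou. Reconciling the shrinking approximation scale $\omega_n2^{-n}$, the quantitative equidistribution of the level $n$ sums at that scale, and the transversality range is the delicate step, and the restriction to $(1.497\ldots,2)$ is exactly the interval on which the transversality estimate underpinning the pair-correlation bound is available. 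A final zero-full law, or a Lebesgue density argument exploiting the self-similarity of these limsup sets, upgrades a positive measure conclusion to the full measure asserted in the theorem.
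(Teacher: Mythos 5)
You have correctly identified the decisive analytic input: the averaged pair-correlation bound coming from the transversality arguments of Benjamini and Solomyak (this is \cite{Solomyak1}, not \cite{Solomyak}), namely $\int_{\mathcal{I}}\#P(\beta,s,n)2^{-n}\,d\beta\leq C_{1}s$, and this is exactly the estimate the paper uses. However, your route from that input to the theorem has a genuine gap, and you have in effect located it yourself without filling it. The second moment computation, even after averaging over $\beta$, gives $\int\big(\int N_{n}^{2}\big)d\beta\lesssim C_{1}\omega_{n}^{2}$ against a first moment squared of order $\omega_{n}^{2}$; since $C_{1}$ is just some constant (and the density of $\mu_{\beta}$ genuinely fluctuates, so the variance does not vanish even in principle), Chebyshev only yields that a \emph{positive proportion} of $I_{\beta}$ is covered by $E_{n}(\beta)$ on average. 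Your stated reduction target, $\liminf_{n}\int_{\mathcal{I}}\lambda(I_{\beta}\setminus E_{n}(\beta))\,d\beta=0$, is strictly stronger than the theorem itself (the theorem gives $\lambda(\liminf_{n}(I_{\beta}\setminus E_{n}))=0$, not $\liminf_{n}\lambda(I_{\beta}\setminus E_{n})=0$) and is not accessible from the pair-correlation bound. The closing appeal to ``a final zero-full law, or a Lebesgue density argument exploiting the self-similarity'' is precisely the missing heart of the proof: no zero-one law is available here, and the naive self-similar upgrade fails because transporting a level-$N$ interval of length $c2^{-N}$ into a small neighbourhood via a prefix map contracts by $\beta^{-M}$ rather than $2^{-M}$, so the image has length $c(2/\beta)^{M}2^{-(N+M)}$ and is \emph{not} an admissible level-$(N+M)$ target interval for the function $2^{-n}$.

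The paper resolves exactly this difficulty, and in doing so reveals the true role of the hypothesis $\omega_{n}\to\infty$, which your sketch assigns only to making the first moment diverge. The argument is by contradiction: if the set of bad $\beta$ has positive measure, Fubini produces a single point $y$ that is a density point of $Bad(\beta,L)$ for a positive-measure set of $\beta$; the averaged pair-correlation bound (via a Markov inequality over $\beta$, not a variance bound over $x$) produces for each $N$ a parameter $\beta_{N}$ in that set for which at least $2^{N-1}$ level-$N$ sums are $\frac{\lambda(A)}{5C_{1}}2^{-N}$-separated; the corresponding disjoint intervals are then transported into the window $[y-\beta_{N}^{-M},y+\beta_{N}^{-M}]$ using the length-$M$ prefix of a $\beta_{N}$-expansion of $y$, and the condition $\omega_{N+M}\geq c(2/\beta)^{M}$ --- available for large $N$ precisely because $\omega_{n}\to\infty$ with $M$ fixed --- guarantees these transported intervals still lie in the complement of $Bad(\beta_{N},L)$. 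This forces a definite proportion $\lambda(A)/(20C_{1})$ of the window into the complement, contradicting the density point property. Without this localisation-at-a-density-point mechanism (or a substitute for it), your proposal establishes at best that $W_{\beta}(\omega_{n}2^{-n})$ has measure bounded below by a constant for almost every $\beta$, not full measure.
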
 The quantity $1.497\ldots$ is a constant appearing as a consequence of transversality arguments used in \cite{Solomyak1}. The proof of Theorem \ref{Main theorem} relies heavily on counting estimates appearing within this paper. In the statement of Theorem \ref{Main theorem} the sequence $(\omega_{n})_{n=1}^{\infty}$  should be interpreted as a sequence that tends to infinity at a very slow rate. As an application of Theorem \ref{Main theorem} we obtain the following corollary.

\begin{cor}
\label{cor1}
For almost every $\beta\in(1.497\ldots,2),$ the set of $x\in I_{\beta}$ with infinitely many solutions to the inequalities $$0\leq x-\sum_{i=1}^{n}\frac{\epsilon_{i}}{\beta^{i}}\leq \frac{\log n}{2^{n}} ,$$ is of full Lebesgue measure within $I_{\beta}$.
\end{cor}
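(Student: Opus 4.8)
The plan is to deduce Corollary \ref{cor1} directly from Theorem \ref{Main theorem}. The first step is to recognise that the set described in the statement is precisely $W_{\beta}(\Psi)$ for the approximating function $\Psi:\mathbb{N}\to\mathbb{R}_{\geq 0}$ given by $\Psi(n)=\frac{\log n}{2^{n}}$. Indeed, unwinding the definition of $W_{\beta}(\Psi)$ through the inequalities in (\ref{approx equation}), a point $x$ lies in $W_{\beta}(\Psi)$ exactly when there are infinitely many $n\in\mathbb{N}$ for which some level $n$ sum satisfies $0\leq x-\sum_{i=1}^{n}\epsilon_{i}\beta^{-i}\leq \frac{\log n}{2^{n}}$, which is the condition appearing in the corollary.

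The second step is to write $\Psi(n)=\omega_{n}\cdot 2^{-n}$ with $\omega_{n}:=\log n$ and to check that $(\omega_{n})_{n=1}^{\infty}$ is admissible for Theorem \ref{Main theorem}, i.e.\ that $\lim_{n\to\infty}\omega_{n}=\infty$. This is immediate since $\log n\to\infty$. The only minor point is that $\omega_{1}=\log 1=0$ is not strictly positive; however, this is harmless because $W_{\beta}(\Psi)$ is a limsup set, so its membership is unaffected by altering $\Psi$ at finitely many values of $n$. One checks this from the definition: the intersection $\bigcap_{m=1}^{\infty}\bigcup_{n=m}^{\infty}(\cdots)$ is a decreasing intersection, so it coincides with $\bigcap_{m=2}^{\infty}\bigcup_{n=m}^{\infty}(\cdots)$, which does not see the value of $\Psi$ at $n=1$. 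We may therefore replace $\omega_{1}$ by any positive constant without changing $W_{\beta}(\Psi)$, producing a genuine sequence of positive reals tending to infinity.

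Applying Theorem \ref{Main theorem} to this sequence then gives that for almost every $\beta\in(1.497\ldots,2)$ the set $W_{\beta}(\log n\cdot 2^{-n})$ has full Lebesgue measure within $I_{\beta}$, which is exactly the conclusion of the corollary. I do not expect any genuine obstacle here: all of the substantive content resides in Theorem \ref{Main theorem}, and the corollary simply records that $\log n$ is a concrete instance of a sequence $(\omega_{n})_{n=1}^{\infty}$ that tends to infinity arbitrarily slowly, exactly the regime to which Theorem \ref{Main theorem} applies.
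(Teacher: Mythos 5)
Your proposal is correct and is exactly how the paper obtains Corollary \ref{cor1}: it is the special case $\omega_{n}=\log n$ of Theorem \ref{Main theorem}, and the paper offers no further argument. Your observation that the non-positivity of $\omega_{1}=\log 1$ is immaterial for a limsup set is a sensible (if pedantic) touch that the paper leaves implicit.
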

 \begin{remark} Note that any $\beta$ which satisfies a height one polynomial, as well as failing to be approximation regular, also satisfies $\lambda(W_{\beta}(\log n \cdot 2^{-n}))=0$. So there is a dense set of exceptions to Corollary \ref{cor1}. It is natural to ask about the set $I_{\beta}\setminus W_{\beta}(\log n \cdot 2^{-n}).$ We will see later that there exists $\beta$ for which $W_{\beta}(\log n\cdot 2^{-n})$ is of full Lebesgue measure within $I_{\beta}$, yet $I_{\beta}\setminus W_{\beta}(\log n\cdot 2^{-n})$ has positive Hausdorff dimension.
\end{remark}
 The remaining difficulty in proving Conjecture \ref{conjecture 1} is not knowing how to deal with $\Psi$ for which $\sum_{n=1}^{\infty}2^{n}\Psi(n)=\infty,$ yet $2^{n}\Psi(n)$ tends to zero. Theorem \ref{Main theorem} provides no answers in this case. However, Theorem \ref{Main theorem} does at least demonstrate that studying approximations of the order $2^{-n}$ is the correct action to take.

Given Theorem \ref{Main theorem}, it is natural to ask whether a single $\beta$-expansion $(\epsilon_{i})_{i=1}^{\infty}\in\{0,1\}^{\mathbb{N}}$ can satisfy $$0\leq x-\sum_{i=1}^{n}\frac{\epsilon_{i}}{\beta^{i}}\leq \frac{\omega_{n}}{2^{n}},$$ for infinitely many $n\in\mathbb{N}$? This happens whenever a mild technical condition is satisfied. We say that $(\omega_{n})_{n=1}^{\infty}$ is \textit{growing regularly}, if for each $m\in\mathbb{N}$ there exists $K_{m}\in\mathbb{N}$ such that
\begin{equation}
\label{decay reg 2}
\frac{\omega(n+m)}{\omega(n)}\geq \frac{1}{K_{m}}
\end{equation}holds for every $n\in\mathbb{N}.$ We emphasise that the constant $K_{m}$ is allowed to depend on $m$. Note that a sequence $(\omega_{n})_{n=1}^{\infty}$ satisfies the growing regularly property whenever $(\omega_{n})_{n=1}^{\infty}$ is increasing, simply take $K_{m}=1$ for all $m\in\mathbb{N}$. The condition $(\omega_{n})_{n=1}^{\infty}$ is \textit{growing regularly} is equivalent to the function $\Psi(n)=\omega_{n}\cdot 2^{-n}$ decaying regularly. Where decaying regularly is as in \cite{Bak}. We have the following theorem.
\begin{thm}
\label{decay theorem}
Let $(\omega_{n})_{n=1}^{\infty}$ be a sequence of real numbers that tend to infinity and are growing regularly. Then for almost every $\beta\in(1.497\ldots,2),$ the set of $x\in I_{\beta}$ that have a $\beta$-expansion $(\epsilon_{i})_{i=1}^{\infty}$ that satisfies $$0\leq x-\sum_{i=1}^{n}\frac{\epsilon_{i}}{\beta^{i}}\leq \frac{\omega_{n}}{2^{n}},$$ for infinitely many $n\in\mathbb{N},$ is of full Lebesgue measure within $I_{\beta}$.
\end{thm}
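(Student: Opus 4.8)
The plan is to deduce Theorem \ref{decay theorem} from Theorem \ref{Main theorem} together with a deterministic statement, valid for each fixed $\beta$, that converts membership of $W_{\beta}(\Psi)$ into the existence of a single $\beta$-expansion realising the approximation, provided $\Psi$ decays regularly. Write $\Psi(n)=\omega_{n}2^{-n}$ and let $E_{\beta}$ denote the set of $x\in I_{\beta}$ possessing a $\beta$-expansion $(\epsilon_{i})_{i=1}^{\infty}$ with $0\le x-\sum_{i=1}^{n}\epsilon_{i}\beta^{-i}\le \Psi(n)$ for infinitely many $n$. Since any such expansion exhibits, for each of these $n$, a level $n$ sum satisfying (\ref{approx equation}), we have the trivial inclusion $E_{\beta}\subseteq W_{\beta}(\Psi)$; the content of the theorem is therefore the reverse inclusion up to a set of measure zero.

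First I would establish the bridging lemma: for every $\beta\in(1,2)$ and every regularly decaying $\Psi$ there is a constant $C=C(\Psi)$ such that each $x\in W_{\beta}(\Psi)$ possesses a single $\beta$-expansion with $0\le x-\sum_{i=1}^{n}\epsilon_{i}\beta^{-i}\le C\Psi(n)$ for infinitely many $n$. The mechanism is as follows. If $n$ is a level at which $x$ admits a good prefix $(\epsilon_{i})_{i=1}^{n}$, then the rescaled residual $\beta^{n}(x-\sum_{i=1}^{n}\epsilon_{i}\beta^{-i})\le \omega_{n}(\beta/2)^{n}$ is extremely small because $\beta<2$; consequently $x$ in fact admits many length $n$ prefixes lying within $\Psi(n)$ of it from below, and each extends freely to a full expansion of $x$. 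This abundance of compatible good prefixes is what allows a single expansion to be steered through infinitely many good levels: having committed to a good prefix at one such level, the smallness of the residual leaves enough freedom in the tail to re-synchronise with a good prefix at a later level. The growing regularity hypothesis (\ref{decay reg 2}) enters precisely here, since across the bounded transition window needed to re-synchronise the target $\Psi$ can only have shrunk by a factor controlled by the constants $K_{m}$, so the error inherited through the transition remains within a fixed multiple of the new target. This is the statement available (for regularly decaying $\Psi$) in \cite{Bak}, and I would either invoke it directly or reproduce the argument above.

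Granting the bridging lemma, the deduction is immediate once the constant $C$ is absorbed. Given $(\omega_{n})_{n=1}^{\infty}$ tending to infinity and growing regularly, the sequence $(\omega_{n}/C)_{n=1}^{\infty}$ still tends to infinity and still grows regularly, so $\tilde{\Psi}(n)=(\omega_{n}/C)2^{-n}$ decays regularly. Applying Theorem \ref{Main theorem} to $(\omega_{n}/C)_{n=1}^{\infty}$ shows that for almost every $\beta\in(1.497\ldots,2)$ the set $W_{\beta}(\tilde{\Psi})$ has full Lebesgue measure in $I_{\beta}$. For such a $\beta$, the bridging lemma places every $x\in W_{\beta}(\tilde{\Psi})$ into the set of points possessing a $\beta$-expansion with $0\le x-\sum_{i=1}^{n}\epsilon_{i}\beta^{-i}\le C\tilde{\Psi}(n)=\omega_{n}2^{-n}$ for infinitely many $n$, which is exactly the set in the statement. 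Hence that set has full measure in $I_{\beta}$ for almost every such $\beta$.

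The main obstacle is the bridging lemma, and within it the re-synchronisation step. A single expansion of $x$ is one infinite branch of the tree of prefixes of $x$, whereas the good prefixes supplied by $W_{\beta}(\Psi)$ at different levels need not be nested, so one cannot in general pass a single branch through them. Overcoming this requires exploiting both the exponential smallness of the residuals, which produces many mutually compatible good prefixes at each good level and the freedom to adjust the tail, and the growing regularity of $(\omega_{n})_{n=1}^{\infty}$, which bounds the loss incurred while the expansion is steered from one good level to the next. Controlling these two effects simultaneously, so that the inherited error stays within a constant multiple of the target at infinitely many levels, is the crux of the argument.
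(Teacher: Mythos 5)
Your proposal matches the paper's own route: the paper proves Theorem \ref{decay theorem} by combining Theorem \ref{Main theorem} with the argument of Theorem 1.4 of \cite{Bak}, which is exactly your bridging lemma converting membership of $W_{\beta}(\Psi)$ into a single $\beta$-expansion under the growing-regularity (equivalently, decay-regularity) hypothesis. The paper omits the details as a ``simple generalisation,'' so your sketch, including the absorption of the constant into the sequence $(\omega_{n}/C)_{n=1}^{\infty}$, is a faithful reconstruction of the intended proof.
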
 Replicating the proof of Theorem $1.4$ from \cite{Bak}, we can use Theorem \ref{Main theorem} to prove Theorem \ref{decay theorem}. As this proof is a simple generalisation we do not include the details. As a consequence of Theorem \ref{decay theorem} we obtain the following corollary.
\begin{cor}
For almost every $\beta\in(1.497\ldots,2),$ the set of $x\in I_{\beta}$ that have a $\beta$-expansion $(\epsilon_{i})_{i=1}^{\infty}$ that satisfies $$0\leq x-\sum_{i=1}^{n}\frac{\epsilon_{i}}{\beta^{i}}\leq \frac{\log n}{2^{n}},$$ for infinitely many $n\in\mathbb{N},$ is of full Lebesgue measure within $I_{\beta}$.
\end{cor}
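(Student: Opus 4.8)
The plan is to derive this statement as an immediate application of Theorem \ref{decay theorem}, specialising to the sequence $\omega_{n}=\log n$. Theorem \ref{decay theorem} imposes two hypotheses on $(\omega_{n})_{n=1}^{\infty}$: that it tends to infinity, and that it is growing regularly. Once both are verified for $\omega_{n}=\log n$, the desired conclusion is exactly the statement of Theorem \ref{decay theorem} with this choice, namely that for almost every $\beta\in(1.497\ldots,2)$ the set of $x\in I_{\beta}$ admitting a $\beta$-expansion with $0\leq x-\sum_{i=1}^{n}\epsilon_{i}\beta^{-i}\leq \log n\cdot 2^{-n}$ for infinitely many $n$ has full Lebesgue measure in $I_{\beta}$.

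The first hypothesis is immediate, since $\log n\to\infty$ as $n\to\infty$. For the second, recall from the remark following \eqref{decay reg 2} that any increasing sequence is automatically growing regularly, with $K_{m}=1$ for every $m\in\mathbb{N}$. As the logarithm is an increasing function, the sequence $(\log n)_{n=1}^{\infty}$ is increasing, and hence growing regularly. The only technical point to address is that $\log 1=0$ fails to be positive, so that $\omega_{1}$ is not a legitimate positive value and the ratio in \eqref{decay reg 2} is undefined at $n=1$. This causes no difficulty: the conclusion concerns an approximation required to hold for infinitely many $n$, so the behaviour at any single index is irrelevant. I would therefore simply redefine $\omega_{1}$ to be any fixed positive constant, which preserves both the divergence to infinity and the growing regularly property while leaving the associated limsup set unchanged. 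With this minor adjustment Theorem \ref{decay theorem} applies verbatim and the corollary follows. There is essentially no obstacle beyond this bookkeeping, since all the substantive work is contained in Theorem \ref{decay theorem}.
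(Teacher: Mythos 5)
Your proposal is correct and matches the paper exactly: the paper derives this corollary directly from Theorem \ref{decay theorem} with $\omega_{n}=\log n$, and your verification of the two hypotheses (divergence and growing regularity via monotonicity, with the harmless fix at $n=1$) is precisely the routine check the paper leaves implicit.
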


In Section $2$ we prove Theorem \ref{Main theorem}. In Section $3$ we study the Hausdorff dimension and Hausdorff measure of the set $W_{\beta}(\Psi)$ in the case when $\sum_{n=1}^{\infty}2^{n}\Psi(n)<\infty.$ In particular, we employ the mass transference principle of Beresnevich and Velani \cite{BerVel} to calculate these quantities for certain values of $\beta$.

Before moving onto our proof of Theorem \ref{Main theorem} we summarise the work of some other authors on the approximation properties of $\beta$-expansions. In \cite{PerRev} and \cite{PerRevA} Persson and Reeve consider a setup very similar to our own, they introduced the set  $$K_{\beta}(\Psi):=\bigcap_{m=1}^{\infty}\bigcup_{n=m}^{\infty}\bigcup_{(\epsilon_{i})_{i=1}^{n}\in\{0,1\}^{n}}\Big[\sum_{i=1}^{n}\frac{\epsilon_{i}}{\beta^{i}}-\Psi(n),\sum_{i=1}^{n}\frac{\epsilon_{i}}{\beta^{i}}+\Psi(n)\Big].$$ Notice that $W_{\beta}(\Psi)\subseteq K_{\beta}(\Psi).$ Our setup is slightly different because we are interested in the approximation properties of prefixes. Persson and Reeve restrict to the case where $\Psi(n)=2^{-n\alpha}$ for some $\alpha\in(1,\infty),$ so in particular $K_{\beta}(2^{-n\alpha})$ is of zero Lebesgue measure for any $\beta\in(1,2)$. Motivated by Falconer \cite{Falconer} they studied the intersection properties of $K_{\beta}(\Psi)$. In \cite{Falconer} Falconer defined $G^{s}$ to be the set of $A\subseteq \mathbb{R},$ which have the property that for any countable collection of similarities $\{f_{j}\}_{j=1}^{\infty},$ we have $$\dim_{H}\Big(\bigcap_{j=1}^{\infty}f_{j}(A)\Big)\geq s.$$ Persson and Reeve generalised the definition of $G^{s}$ to arbitrary intervals $I$ by defining $G^{s}(I):=\{A\subseteq I: A+diam(I)\mathbb{Z}\in G^{s}\}.$ The main results of \cite{PerRev} and  \cite{PerRevA} are summarised in the following theorem.
\begin{thm}
\label{PerRev thm}
Let $\alpha\in(1,\infty)$ and $\Psi(n)=2^{-\alpha n}$.
\begin{itemize}
  \item For all $\beta\in(1,2),$ $\dim_{H}(K_{\beta}(\Psi))\leq \frac{1}{\alpha}$.
  \item For almost every $\beta\in(1,2),$ $K_{\beta}(\Psi)\in G^{s}(I_{\beta})$ for $s=\frac{1}{\alpha}.$
  \item For a dense set of $\beta\in(1,2),$ $\dim_{H}(K_{\beta}(\Psi))<\frac{1}{\alpha}.$
  \item For all $\beta\in(1,2)$, $K_{\beta}(\Psi)\in G^{s}(I_{\beta})$ for $s=\frac{\log \beta}{\alpha\log 2}.$
  \item For a countable set of $\beta\in(1,2),$ $\dim_{H}( K_{\beta}(\Psi))=\frac{\log \beta}{\alpha\log 2}.$
\end{itemize}
\end{thm}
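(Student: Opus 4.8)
The plan is to reduce every clause of the theorem to a single combinatorial quantity, namely the number $N_{\beta}(n)$ of \emph{distinct} level $n$ sums, together with the scale at which these sums are spread across $I_{\beta}$. I would first record two universal bounds: $\beta^{n}\lesssim N_{\beta}(n)\leq 2^{n}$. The upper bound is immediate since there are only $2^{n}$ sequences in $\{0,1\}^{n}$. For the lower bound I would use that every $x\in I_{\beta}$ has an $n$ prefix, so the tail intervals $\bigl[\sum_{i=1}^{n}\epsilon_{i}\beta^{-i},\sum_{i=1}^{n}\epsilon_{i}\beta^{-i}+\tfrac{\beta^{-n}}{\beta-1}\bigr]$ cover $I_{\beta}$; since $I_{\beta}$ has length $\tfrac{1}{\beta-1}$ while each such interval has length $\tfrac{\beta^{-n}}{\beta-1}$, at least $\beta^{n}$ distinct level $n$ sums are required.

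For the first bullet I would run the standard Hausdorff measure covering estimate. Fixing $m$, the set $K_{\beta}(\Psi)$ is covered by the intervals of length $2\cdot 2^{-\alpha n}$ for $n\geq m$, of which there are at most $2^{n}$, so for $s>1/\alpha$
$$\sum_{n\geq m}2^{n}\bigl(2\cdot 2^{-\alpha n}\bigr)^{s}=2^{s}\sum_{n\geq m}2^{n(1-\alpha s)},$$
which tends to $0$ as $m\to\infty$; hence $\dim_{H}K_{\beta}(\Psi)\leq 1/\alpha$ for every $\beta$. Replacing the count $2^{n}$ by $N_{\beta}(n)$ shows that whenever $N_{\beta}(n)\asymp\beta^{n}$ the same computation yields $\dim_{H}K_{\beta}(\Psi)\leq\tfrac{\log\beta}{\alpha\log 2}$, strictly below $1/\alpha$ because $\beta<2$. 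Garsia's separation lemma (see \cite{Gar}) guarantees that distinct level $n$ sums of a Pisot number are separated by $\gtrsim\beta^{-n}$, so $N_{\beta}(n)\asymp\beta^{n}$ for every Pisot $\beta$; as the Pisot numbers are both dense in $(1,2)$ and countable, this delivers the dense set of the third bullet together with the upper bound required for the equality in the fifth.

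The lower bounds are the membership statements in Falconer's class $G^{s}(I_{\beta})$, and here I would appeal to a ubiquity framework: if the level $n$ sums form a system of resonant points that is $\rho_{n}$-ubiquitous throughout $I_{\beta}$ and we localise to balls of radius $2^{-\alpha n}\ll\rho_{n}$, then the resulting limsup set lies in $G^{s}(I_{\beta})$ with $s=\liminf_{n}\tfrac{-\log\rho_{n}}{\alpha n\log 2}$. For the fourth bullet I would invoke only the universal spacing: the $\beta^{n}$ guaranteed distinct sums may be taken $\rho_{n}\asymp\beta^{-n}$ dense, giving $s=\tfrac{\log\beta}{\alpha\log 2}$ for \emph{every} $\beta$; combined with the Pisot upper bound this closes the fifth bullet, since $G^{s}$ membership forces dimension at least $s$. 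For the second bullet I instead need full ubiquity at the finer scale $\rho_{n}\asymp 2^{-n}$, i.e. the $2^{n}$ sums must genuinely equidistribute at resolution $2^{-n}$, which produces the exponent $s=1/\alpha$.

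The main obstacle is precisely this last equidistribution input. Establishing $\rho_{n}\asymp 2^{-n}$ ubiquity for almost every $\beta$ is where the depth lies, and I would extract it from the absolute continuity of the Bernoulli convolution $\mu_{\beta}$ for almost every $\beta$ (Solomyak's theorem \cite{Solomyak}), translated into a quantitative statement that the level $n$ sums meet every subinterval of length $\gg 2^{-n}$. Converting absolute continuity, or a suitable transversality estimate, into a uniform ubiquity bound valid on a full-measure set of $\beta$, and then checking that the hypotheses of the $G^{s}$ machinery are met, is the technically demanding step; by comparison the covering arguments and the Pisot computations are routine.
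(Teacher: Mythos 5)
First, a point of comparison: this theorem is not proved in the paper at all; it is stated as a summary of the results of Persson and Reeve \cite{PerRev,PerRevA}, so there is no in-paper argument to measure your proposal against. Your covering estimate for the first bullet, the lower bound $N_{\beta}(n)\gtrsim\beta^{n}$ via the prefix covering of $I_{\beta}$, and the reduction of the fifth bullet to ``separation upper bound plus $G^{s}$ lower bound'' (using that membership of $G^{s}$ forces dimension at least $s$) are all sound in outline.

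There are, however, two genuine gaps. The first is the claim that the Pisot numbers are dense in $(1,2)$: by Salem's theorem the set of Pisot numbers is closed, so it cannot be dense in any interval (its smallest limit point is the golden ratio). Your third bullet therefore needs a different dense family; the natural one is the set of roots of height-one polynomials (coefficients in $\{-1,0,1\}$), which is dense in $(1,2)$ and for which a single coincidence among level $k$ sums propagates blockwise to give $N_{\beta}(n)\leq C\gamma^{n}$ with $\gamma<2$, whence $\dim_{H}(K_{\beta}(\Psi))\leq\frac{\log\gamma}{\alpha\log 2}<\frac{1}{\alpha}$. (Pisot numbers, being countable and enjoying Garsia's $c\beta^{-n}$ separation, still serve for the fifth bullet.) The second gap is the source of the equidistribution input for the second bullet. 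Absolute continuity of $\mu_{\beta}$ is a statement about the infinite sums; truncating at level $n$ only locates a level $n$ sum to within $O(\beta^{-n})$ of a typical point, which recovers $\beta^{-n}$-density but says nothing at the much finer scale $2^{-n}$. What is actually required is a spacing or pair-correlation estimate for the finite sums at scale $2^{-n}$, valid for almost every $\beta$ --- exactly the kind of transversality-based counting that Benjamini and Solomyak prove in \cite{Solomyak1} and that Persson and Reeve encode in their Frostman-type lemma for the classes $G^{s}$. You correctly identify this as the hard step, but the route you propose (extracting it from Solomyak's absolute continuity theorem) does not deliver it.
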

The approximation properties of $\beta$-expansions were also studied by Dajani, Komornik, Loreti, and de Vries in \cite{Daj}. Given $x\in I_{\beta},$ they call a sequence $(\epsilon_{i})_{i=1}^{\infty}$ an \emph{optimal expansion for $x$} if $(\epsilon_{i})_{i=1}^{\infty}$ is a $\beta$-expansion for $x,$ and if for every other $\beta$-expansion of $x$ the following inequality holds for every $n\in\mathbb{N}:$ $$x-\sum_{i=1}^{n}\frac{\epsilon_{i}}{\beta^{i}}\leq x-\sum_{i=1}^{n} \frac{\epsilon_{i}'}{\beta^{i}}.$$ In other words, for each $n\in\mathbb{N}$ the $n$ prefix $(\epsilon_{i})_{i=1}^{n}$ always provides the closest approximation. In \cite{Daj} the authors showed that every $x$ in $I_{\beta}$ has an optimal expansion if and only if $\beta$ is contained in a special class of algebraic integers known as the multinacci numbers. A \textit{multinacci number} is the unique root of an equation of the form $x^{n+1}=x^{n}+\cdots +x +1$ contained in $(1,2)$. The main result of \cite{Daj} is the following.

\begin{thm}
\begin{itemize}
  \item Let $\beta$ be a multinacci number, then every $x\in I_{\beta}$ has an optimal expansion.
  \item If $\beta\in (1,2)$ is not a multinacci number, then the set of $x\in I_{\beta}$ with an optimal expansion is nowhere dense and has zero Lebesgue measure.
\end{itemize}
\end{thm}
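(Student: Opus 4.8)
The plan is to identify the optimal expansion, when it exists, with the \emph{greedy expansion}, and to reduce the whole statement to a single combinatorial comparison. Recall that an expansion $(\epsilon_i)_{i=1}^{\infty}$ is optimal precisely when its partial sums $S_n=\sum_{i=1}^{n}\epsilon_i\beta^{-i}$ dominate those of every competing expansion at every level $n$; equivalently, the tails $x-S_n$ are simultaneously minimal. The natural candidate is the greedy (lexicographically largest) expansion. The one subtlety, and the source of the dichotomy, is that for a general $\beta\in(1,2)$ the lexicographic order on prefixes does \emph{not} coincide with the order by partial-sum value: if two prefixes first differ at position $k$, the lexicographically larger one can exceed the other by as little as $\beta^{-k}-\sum_{i=k+1}^{n}\beta^{-i}$, which is negative when $\beta$ is close to $1$. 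I would first record this discrepancy quantitatively, since the whole theorem turns on when it can be made to vanish.

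For the sufficiency direction, suppose $\beta$ is a multinacci number, so that $1=\sum_{i=1}^{p}\beta^{-i}$ for some $p\ge 2$ and hence the substitution identity $\beta^{-k}=\sum_{i=k+1}^{k+p}\beta^{-i}$ holds for every $k$. I would prove that the greedy expansion of any $x$ satisfies $S_n\ge S_n'$ for every competing expansion $(\epsilon_i')_{i=1}^{\infty}$ and every $n$. Reducing to the first position $k$ at which greedy places a $1$ and the competitor a $0$, the competitor must recover the deficit $\beta^{-k}$ using later digits; the identity $\beta^{-k}=\sum_{i=k+1}^{k+p}\beta^{-i}$ shows that recovering it in full requires the entire block $1^{p}$ at positions $k+1,\dots,k+p$, which the competitor can match only in the limit. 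An induction on the successive branchings, using the self-similarity of the expansion map and the identity at each branching, then shows that the greedy partial sums never fall behind at any finite level. Hence the greedy expansion is optimal and every $x\in I_{\beta}$ has an optimal expansion.

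For the converse and the smallness statement, suppose $\beta$ is not multinacci. Then the substitution identity fails, and I would extract from its failure an explicit \emph{crossing configuration}: a common value $x$ together with two of its expansions whose partial-sum orders cross, one prefix winning at an intermediate level and the other winning later. The presence of such a crossing among the expansions of $x$ forbids optimality, so the set of $x$ admitting an optimal expansion is contained in the set of points whose expansions avoid the crossing configuration at every scale. It remains to show this avoidance set is nowhere dense and Lebesgue-null. For nowhere density I would argue that, because $\beta<2$, branchings occur in every cylinder and the forbidden pattern can be forced within boundedly many digits starting from any prefix, so the avoidance set contains no interval. For nullity I would show, via a Borel--Cantelli argument against the Bernoulli convolution $\mu_{\beta}$ together with the fact that a $\mu_{\beta}$-generic expansion contains every admissible pattern infinitely often, that Lebesgue-almost every $x$ eventually exhibits a crossing.

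The main obstacle is the converse direction of the previous paragraph. Turning the purely algebraic failure of the multinacci identity into a single, scale-invariant crossing configuration that is genuinely unavoidable is delicate: one must locate the precise position and block length where the partial-sum order reverses, and control it uniformly enough to force the pattern both from every cylinder (for nowhere density) and along a full-measure set of orbits (for nullity). The multinacci numbers are exactly the boundary case in which the lexicographic order and the partial-sum order agree at every level, so the entire difficulty is to show that any deviation from this boundary case propagates to a robust, unavoidable conflict between maximizing partial sums at different levels.
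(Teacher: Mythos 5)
First, a point of reference: the paper does not prove this statement. It is quoted verbatim as the main result of Dajani, Komornik, Loreti and de Vries \cite{Daj}, so there is no in-paper proof to compare against. Your overall skeleton --- any optimal expansion must be the greedy one (since at the first disagreement the greedy prefix is strictly lexicographically larger and hence has strictly larger partial sum), the identity $\beta^{-k}=\sum_{i=k+1}^{k+p}\beta^{-i}$ drives sufficiency, and an unavoidable ``crossing'' pattern plus genericity drives necessity --- is the right shape and matches the outline of the argument in \cite{Daj}. But two steps as written have genuine gaps.

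For sufficiency, the claim that the competitor ``can match the deficit only in the limit'' and that ``an induction on successive branchings'' closes the argument is not a proof. The worst-case catch-up is $\sum_{i>k,\ \epsilon_i=0,\ i\le n}\beta^{-i}$, which is not confined to a single block of length $p$ and which, bounded naively, exceeds $\beta^{-k}$ for every $\beta\in(1,2)$ once $n$ is large. What saves the multinacci case is the lexicographic (Parry) characterisation of greedy digit sequences in these bases, which constrains where the greedy expansion can have $0$'s after a $1$, combined with the greedy inequality $x-S_n<\beta^{-(n+j)}$ up to the next digit $1$; without invoking this structure the induction does not close. For necessity, your measure-zero argument runs Borel--Cantelli against the Bernoulli convolution $\mu_{\beta}$, but this is the wrong measure: $\mu_{\beta}$ is singular with respect to Lebesgue for every Pisot $\beta$ (which includes many non-multinacci $\beta$ in $(1,2)$), so a $\mu_{\beta}$-full set of crossing configurations says nothing about Lebesgue measure. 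Moreover the crossing must be located in the \emph{greedy} expansion of $x$ (the unique candidate for optimality), not in a $\mathbb{P}$-random expansion. The correct tool is the ergodicity of the greedy $\beta$-transformation with respect to the Parry measure, which is equivalent to Lebesgue with density bounded away from $0$ and $\infty$; that gives both that Lebesgue-a.e.\ greedy expansion contains the forbidden word and, via density of the corresponding cylinders, the nowhere density claim (for which you should also note that showing the avoidance set has empty interior is weaker than showing it is nowhere dense).
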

Note that the countable set of $\beta$ appearing in Theorem \ref{PerRev thm} for which $\dim_{H}( K_{\beta}(\Psi))=\frac{\log \beta}{\alpha\log 2}$ is precisely the set of multinacci numbers.

\section{Proof of Theorem \ref{Main theorem}}
\subsection{Normalisation to the unit interval} Before giving our proof of Theorem \ref{Main theorem} we normalise $I_{\beta}$ and the level $n$ sums, so we can focus our attention on the unit interval $[0,1]$. This will make some of our later calculations more straightforward. Given $\beta\in(1,2)$ and $\Psi:\mathbb{N}\to\mathbb{R}_{\geq 0},$ we introduce the set
$$V_{\beta}(\Psi):=\bigcap_{m=1}^{\infty}\bigcup_{n=m}^{\infty}\bigcup_{(\epsilon_{i})_{i=1}^{n}\in\{0,1\}^{n}}\Big[(\beta-1)\sum_{i=1}^{n}\frac{\epsilon_{i}}{\beta^{i}},(\beta-1)\sum_{i=1}^{n}\frac{\epsilon_{i}}{\beta^{i}}+\Psi(n)\Big].$$ By replacing the $\sum_{i=1}^{n}\epsilon_{i}\beta^{-i}$ term appearing in $W_{\beta}(\Psi)$ with a $(\beta-1)\sum_{i=1}^{n}\epsilon_{i}\beta^{-i}$ term, we ensure that $V_{\beta}(\Psi)\subseteq[0,1]$ whenever $\Psi(n)\to 0.$ The case where $\Psi(n)$ does not tend to zero is trivial, so we will always assume that $V_{\beta}(\Psi)\subseteq[0,1].$ It is important to note that for any $x\in[0,1]$ there exists a sequence $(\epsilon_{i})_{i=1}^{\infty}\in\{0,1\}^{\mathbb{N}}$ such that
\begin{equation}
\label{coding equation}
x=(\beta-1)\sum_{i=1}^{\infty}\frac{\epsilon_{i}}{\beta^{i}}.
\end{equation} This is a simple consequence of the fact that every element of $I_{\beta}$ has a $\beta$-expansion.

In this section we will prove the following theorem.
\begin{thm}
\label{Proved theorem}
Let $(\omega_{n})_{n=1}^{\infty}$ be a sequence of real numbers tending to infinity. Then for almost every $\beta\in(1.497\ldots,2)$ the set $V_{\beta}(\omega_{n}\cdot 2^{-n})$ is of full Lebesgue measure within $[0,1]$.
\end{thm}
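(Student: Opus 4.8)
The plan is to prove Theorem \ref{Proved theorem} via a second-moment (variance) argument on a suitable integral over $\beta$, exploiting the transversality technique of Solomyak--Peres that underlies the constant $1.497\ldots$. For each $n$, consider the counting function that records, for a given $x\in[0,1]$ and $\beta$, the number of length-$n$ words $(\epsilon_i)_{i=1}^n$ whose normalised level $n$ sum $(\beta-1)\sum_{i=1}^n \epsilon_i\beta^{-i}$ lands within $\omega_n 2^{-n}$ to the left of $x$. Write
\begin{equation}
\label{proposal count}
N_n(x,\beta):=\#\Big\{(\epsilon_i)_{i=1}^n\in\{0,1\}^n : 0\leq x-(\beta-1)\sum_{i=1}^n\frac{\epsilon_i}{\beta^i}\leq \omega_n\cdot 2^{-n}\Big\}.
\end{equation}
The point $x$ lies in $V_\beta(\omega_n\cdot 2^{-n})$ precisely when $N_n(x,\beta)\geq 1$ for infinitely many $n$, so by a divergence Borel--Cantelli argument it suffices to show that for (almost every) $\beta$ and almost every $x$ the event $\{N_n(x,\beta)\geq 1\}$ occurs infinitely often. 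The intuition is that there are $2^n$ words and each target interval has length $\omega_n 2^{-n}$, so on average $N_n$ should be of size $\approx \omega_n\to\infty$; the difficulty is concentration.

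First I would set up the averaged first and second moments. Integrating $N_n(x,\beta)$ over $x\in[0,1]$ against Lebesgue measure gives $\int_0^1 N_n(x,\beta)\,dx = \omega_n 2^{-n}\cdot 2^n=\omega_n$ for every fixed $\beta$, which is the clean first-moment estimate. The real work is the second moment: I would bound $\int_0^1 N_n(x,\beta)^2\,dx$, which expands as a double sum over pairs of words $((\epsilon_i),(\epsilon_i'))$ of the Lebesgue measure of the overlap of their two target intervals. The diagonal contribution and near-diagonal pairs are controlled directly, but off-diagonal pairs require knowing that two distinct length-$n$ sums are rarely closer than $2^{-n}$. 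This is exactly where the distribution of level $n$ sums enters, and controlling it uniformly in $n$ for a fixed $\beta$ is hopeless in general (Pisot bases are genuine exceptions). Hence I would not fix $\beta$ but instead integrate a second time, this time over $\beta\in(1.497\ldots,2)$, and use transversality: the key counting estimate (the one the abstract advertises as the technical heart of the paper) should assert that, after integrating over $\beta$, the measure of the set of $\beta$ for which two prescribed distinct words have $\beta$-sums within distance $r$ is $O(r)$, uniformly in the words. This converts the problematic off-diagonal correlations into a convergent sum and yields a bound of the form $\int\!\!\int N_n^2\,d\beta\,dx \ll \big(\int\!\!\int N_n\,d\beta\,dx\big)^2$ up to controlled error terms.

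With the two moments in hand, I would apply the Paley--Zygmund inequality (or a Chebyshev/Cauchy--Schwarz variance bound) to conclude that, on the product space $(1.497\ldots,2)\times[0,1]$, the measure of the set where $N_n\geq 1$ is bounded below by a constant times $(\text{first moment})^2/(\text{second moment})$, which stays bounded away from zero as $n\to\infty$ thanks to $\omega_n\to\infty$ killing the error terms. The remaining step is to upgrade ``positive probability for each $n$'' to ``happens for infinitely many $n$ for almost every $(\beta,x)$.'' Here I expect the main obstacle: the events $\{N_n\geq 1\}$ are not independent across $n$, so a naive Borel--Cantelli fails. I would address this either by establishing quasi-independence / a divergence Borel--Cantelli lemma (Erd\H{o}s--R\'enyi or the Kochen--Stone variant) for the events across different scales $n$, again via a correlation estimate between $N_m$ and $N_n$ integrated over $\beta$, or by a zero-one law argument showing the limsup set has full or null measure fibre-by-fibre and then ruling out null. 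Finally, Fubini converts the full-measure statement on the product space into the fibrewise statement ``for almost every $\beta$, almost every $x\in[0,1]$ lies in $V_\beta(\omega_n\cdot 2^{-n})$,'' which is Theorem \ref{Proved theorem}; the cross-scale correlation control and the transition to infinitely-often are the steps I anticipate being the most delicate.
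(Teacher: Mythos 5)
Your first- and second-moment computations are sound: the diagonal term contributes $\omega_{n}$, and the off-diagonal term, after integrating over $\beta$, is exactly what Theorem \ref{Solomyak} controls (take $s=\omega_{n}$ there), giving $\int_{\mathcal{I}}\int_{0}^{1}N_{n}^{2}\,dx\,d\beta\ll\omega_{n}^{2}$ against a squared first moment of order $\omega_{n}^{2}$. Paley--Zygmund then does give a measure bounded below, uniformly in $n$, for the set of $(\beta,x)$ with $N_{n}\geq 1$. The genuine gap is the final step, which you flag but do not resolve, and it is precisely where the difficulty of the theorem lives. A Kochen--Stone or Erd\H{o}s--R\'enyi argument needs cross-scale correlation bounds of the form $\lambda\times\lambda(\{N_{m}\geq 1\}\cap\{N_{n}\geq 1\})\ll\lambda\times\lambda(\{N_{m}\geq 1\})\cdot\lambda\times\lambda(\{N_{n}\geq 1\})$ summed over $m\neq n$; this is a pair-correlation estimate between level-$m$ sums and level-$n$ sums at \emph{different} scales, which Theorem \ref{Solomyak} does not provide and which is not obviously available. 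Worse, even granting quasi-independence, Kochen--Stone only yields \emph{positive} measure for the limsup set on the product space, not full measure; to upgrade you would need the lower bound to hold uniformly on all small balls (a Knopp-type localisation), and there is no obvious zero--one law for $V_{\beta}(\Psi)$ fibrewise in $\beta$ that would substitute for this.

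The paper's proof takes a different route that sidesteps quasi-independence entirely. It argues by contradiction: if the exceptional set of $\beta$ has positive measure, Fubini produces a single point $y$ that is a Lebesgue density point of the bad set $Bad(\beta,L)$ for a set of $\beta$ of measure at least $A>0$, with the density convergence made uniform over a further subset of measure at least $A/2$. The integral estimate of Theorem \ref{Solomyak} is then used not for a second moment but as a Markov-type bound showing that for each $n$ some $\beta_{n}$ in this subset has at least $2^{n-1}$ level-$n$ sums pairwise separated by $\lambda(A)(5C_{1}2^{n})^{-1}$. The key structural ingredient your proposal lacks is the self-similarity of the set of prefixes: appending the first $M$ digits of a $\beta_{N}$-expansion of $y$ to each of these $2^{N-1}$ words transports a definite proportion of disjoint target intervals into $[y-\beta_{N}^{-M},y+\beta_{N}^{-M}]$, all lying in the complement of $Bad(\beta_{N},L)$ because $\omega_{n}\to\infty$ absorbs the loss from scaling by $\beta^{-M}$ rather than $2^{-M}$. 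This contradicts $y$ being a density point. In effect the density-point argument performs the localisation that your approach would still need after Paley--Zygmund, so as written your proposal establishes the single-scale positive-proportion estimate but not the theorem.
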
 Theorem \ref{Proved theorem} is equivalent to Theorem \ref{Main theorem}. So to conclude our main result we just have to prove Theorem \ref{Proved theorem}.

We now take the opportunity to recall some of the results of Benjamini and Solomyak from \cite{Solomyak1}. The counting estimates provided in this paper will be essential in our proof of Theorem \ref{Proved theorem}. Benjamini and Solomyak study how the set $$A_{n}(\beta):=\Big\{(\beta-1)\sum_{i=1}^{n}\frac{\epsilon_{i}}{\beta^{i}}:(\epsilon_{i})\in\{0,1\}^{n}\Big\}$$ is distributed within $[0,1].$ $A_{n}(\beta)$ is precisely the set of level $n$ sums normalised by a factor $(\beta-1).$ Given $\beta\in(1,2)$, $s>0$ and $n\in\mathbb{N},$ Benjamini and Solomyak introduced the set
$$P(\beta,s,n):=\Big\{(a,b)\in A_{n}(\beta)^{2}: a\neq b \textrm{ and }|a-b|\leq \frac{s}{2^{n}}\Big\}.$$
They conjectured that for almost every $\beta\in(1,2)$ there exists $c,C>0$ such that $$cs\leq \frac{\# P(\beta,s,n)}{2^{n}}\leq Cs$$ for all $n\in\mathbb{N}$ and $s>0$. Within \cite{Solomyak1} some results are proved in the direction of this conjecture. For our purposes we only need one result which is the following.

\begin{thm}[Theorem 2.1 from \cite{Solomyak1}]
\label{Solomyak}
There exists $C_{1}>0$ such that $$\int_{(1.497\ldots,2)}\frac{\# P(\beta,s,n)}{2^{n}}\leq C_{1}s$$ for all $n\in\mathbb{N}$ and $s>0$.
\end{thm}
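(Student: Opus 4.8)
The plan is to estimate the integral by reducing it, through Fubini's theorem, to a family of one–dimensional Lebesgue measure bounds, each of which is controlled by the transversality technique that underlies Solomyak's work on Bernoulli convolutions. First I would expand the count over pairs of words. Any pair of distinct points of $A_{n}(\beta)$ at distance at most $s/2^{n}$ comes from a pair of words $(\epsilon_{i})_{i=1}^{n},(\epsilon_{i}')_{i=1}^{n}\in\{0,1\}^{n}$, so setting $\eta_{i}:=\epsilon_{i}-\epsilon_{i}'\in\{-1,0,1\}$ and letting $z(\eta)$ denote the number of zero entries of $\eta$, we obtain the upper bound
$$\#P(\beta,s,n)\le \sum_{\eta\neq 0}2^{z(\eta)}\,\mathbf{1}\!\left[(\beta-1)\left|\sum_{i=1}^{n}\frac{\eta_{i}}{\beta^{i}}\right|\le \frac{s}{2^{n}}\right],$$
where the weight $2^{z(\eta)}$ records the free choices $\epsilon_{i}=\epsilon_{i}'$ in the positions where $\eta_{i}=0$. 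Integrating over $(1.497\ldots,2)$ and applying Fubini reduces the problem to estimating, for each difference word $\eta$, the Lebesgue measure of the set of $\beta$ for which $\left|\sum_{i=1}^{n}\eta_{i}\beta^{-i}\right|$ is at most a fixed constant multiple of $s\,2^{-n}$; here I use that $\beta-1$ is bounded below on this range.

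Second, I would bring in transversality. Substituting $x=1/\beta$, the interval $(1.497\ldots,2)$ becomes a range of $x$ lying below the transversality threshold $x_{*}=1/1.497\ldots\approx 0.668$, and the difference becomes the polynomial
$$g_{\eta}(x)=\sum_{i=1}^{n}\eta_{i}x^{i}=\pm x^{k}h(x),\qquad h(x)=1+\sum_{j\ge 1}(\pm\eta_{k+j})x^{j},$$
where $k$ is the depth of the first nonzero entry of $\eta$, so that $h$ has constant term $1$ and all coefficients in $\{-1,0,1\}$. For such power series Solomyak's transversality property holds on $[0,x_{*}]$: wherever $h$ is small its derivative is bounded away from $0$, whence $h$ has at most one near-zero on the interval and $\lambda\{x\in[0,x_{*}]:|h(x)|\le\delta\}\le C\delta$ for a uniform constant $C$. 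This is the one-dimensional input, from \cite{Solomyak1}, that converts the smallness of a normalised difference into a measure bound.

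Finally I would assemble these estimates, summing the measure bound for each $g_{\eta}$ against the combinatorial weight $2^{z(\eta)}$, organised by the depth $k$ and by the tail $(\eta_{i})_{i>k}$ (whose contribution to the weight sums to $\sum_{\eta_{>k}}2^{z'}=4^{n-k}$). The main obstacle, and the genuine content of the theorem, lies precisely in this assembly. The scale $x^{k}\in[2^{-k},x_{*}^{k}]$ at which the two expansions first disagree enters the estimate for $\{|g_{\eta}|\le\delta\}$ through the factor $x^{k}$ relating $g_{\eta}$ to $h$, and a crude passage $\{|g_{\eta}|\le\delta\}\subseteq\{|h|\le 2^{k}\delta\}$ loses an exponential factor in $k$; carried through the weighted sum this would produce a spurious factor growing with $n$, turning the desired $C_{1}s$ into a bound of order $C_{1}s\cdot n$. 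Overcoming this requires exploiting the sharp transversality constant responsible for the threshold $1.497\ldots$, together with a careful depth-by-depth summation in which the growth of the combinatorial weight $2^{k-1}$, the decay of the tail count $4^{n-k}$, and the depth-dependent scale factor are balanced so that the resulting sum over $k$ converges geometrically rather than accumulating a factor of $n$. It is this balance that yields $\int_{(1.497\ldots,2)}2^{-n}\#P(\beta,s,n)\,d\beta\le C_{1}s$ with $C_{1}$ independent of both $n$ and $s$.
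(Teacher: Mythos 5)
This theorem is not proved in the paper at all: the text immediately following it states that it is a (deliberately weakened) quotation of Theorem 2.1 of Benjamini and Solomyak \cite{Solomyak1}, so there is no in-paper argument to compare yours with. The honest options are to cite the result, as the paper does, or to supply a complete proof; your proposal does neither, because its final paragraph --- which is where the entire content of the theorem sits --- is an acknowledgement of the difficulty rather than a resolution of it.

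Your opening reductions are the standard and correct ones: bound the number of pairs of distinct values by a weighted sum over difference words $\eta\in\{-1,0,1\}^{n}$, apply Fubini, factor $g_{\eta}(x)=\pm x^{k}h(x)$ at the first disagreement $k$, and invoke $\delta$-transversality for power series with constant term $1$ and coefficients in $[-1,1]$ on $[0,1/1.497\ldots]$ to get $\lambda\{x:|h(x)|\le\rho\}\le C\rho$. But the assembly you defer is not a matter of ``balancing'' the three factors you list: the weighted number of depth-$k$ difference words is $2^{k}\cdot4^{n-k}=4^{n}2^{-k}$, your measure bound for each of them is $C\delta 2^{k}$ with $\delta\asymp s2^{-n}$, and their product $C\delta4^{n}$ is exactly independent of $k$, so the sum over $k=1,\dots,n$ really is of order $ns2^{n}$ --- there is nothing left to trade off once these quantities are multiplied. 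Nor can ``the sharp transversality constant responsible for $1.497\ldots$'' rescue this: that threshold only marks where transversality holds at all; it supplies no $k$-dependent gain. Removing the factor $n$ requires an input you have not identified, e.g.\ that the only difference words attaining the worst-case scale $x^{-k}\approx2^{k}$ are those whose series $h$ vanishes near $x=1/2$, and such tails are an exponentially small (suitably weighted) fraction of the $4^{n-k}$ possibilities, or else an argument run directly on the value set $A_{n}(\beta)$ rather than on words, which is what controls the severe word-to-value overcounting as $\beta\to2$. As written your argument yields at best $\int_{(1.497\ldots,2)}2^{-n}\#P(\beta,s,n)\,d\beta\le Cns$, and a constant depending on $n$ is unusable downstream: the proof of Proposition \ref{Separation prop} and the definition of $\widehat{Bad}(y,L,m)$ both fix $s$ and the density threshold in terms of a single $n$-independent $C_{1}$.
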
Importantly the $C_{1}$ appearing in Theorem \ref{Solomyak} does not depend on $n$ or $s$. Theorem \ref{Solomyak} is a slightly weaker version of Theorem 2.1 from \cite{Solomyak1}, but we only require this weaker statement.

\subsection{Proof of Theorem \ref{Proved theorem}}
For the rest of this section we fix a sequence $(\omega_{n})_{n=1}^{\infty}$ that tends to infinity. For ease of exposition we let $\mathcal{I}:=(1.497\ldots,2)$. Our proof of Theorem \ref{Proved theorem} will be via a proof by contradiction. So let us assume that Theorem \ref{Proved theorem} is false and examine the implications of this assumption.

Given $\beta\in \mathcal{I}$ let
\begin{align*}
Bad(\beta):=\Big\{x\in [0,1]: &\textrm{ There exists finitely many solutions to the inequalities }\\
& 0\leq x-(\beta-1)\sum_{i=1}^{n}\frac{\epsilon_{i}}{\beta^{i}} \leq \frac{\omega_{n}}{2^{n}}\Big\}.
\end{align*}Similarly, given $l\in\mathbb{N}$ we let
\begin{align*}
Bad(\beta,l):=\Big\{x\in [0,1]: &\textrm{ For all }n\geq l \textrm{ there are no solutions to the inequalities}\\
& 0\leq x-(\beta-1)\sum_{i=1}^{n}\frac{\epsilon_{i}}{\beta^{i}} \leq \frac{\omega_{n}}{2^{n}}\Big\}.
\end{align*} Clearly $Bad(\beta)=\cup_{l=1}^{\infty} Bad(\beta,l)$ and $Bad(\beta,l)\subseteq Bad(\beta,l+1)$.
We also introduce the set $$\mathcal{E}:=\Big\{\beta\in\mathcal{I}: \lambda(Bad(\beta))>0\Big\},$$ and to each $l\in\mathbb{N}$ we introduce the following subset of $\mathcal{E}:$
$$\mathcal{E}_{l}:=\Big\{\beta\in\mathcal{I}: \lambda(Bad(\beta,l))>0\Big\}.$$ Remember we are assuming that Theorem \ref{Proved theorem} is false, this is precisely the statement that $\lambda(\mathcal{E})>0$. The following properties are important
\begin{equation}
\label{Decomposition of E}
\mathcal{E}=\bigcup_{l=1}^{\infty}\mathcal{E}_{l}\textrm{ and } \mathcal{E}_{l}\subseteq \mathcal{E}_{l+1}.
\end{equation}The fact that $\cup_{l=1}^{\infty}\mathcal{E}_{l}\subseteq \mathcal{E}$ is obvious. To see why $\mathcal{E}\subseteq \cup_{l=1}^{\infty}\mathcal{E}_{l}$ assume $\beta^{*}\in \mathcal{E},$ so $\lambda(Bad(\beta^{*}))>0$. Since $Bad(\beta^{*})=\cup_{l=1}^{\infty} Bad(\beta^{*},l)$ and $Bad(\beta^{*},l)\subseteq Bad(\beta^{*},l+1),$ there must exists $l^{*}\in\mathbb{N}$ sufficiently large for which $\lambda(Bad(\beta^{*},l^{*}))>0,$ therefore $\beta^{*}\in \mathcal{E}_{l^{*}}.$ It follows from (\ref{Decomposition of E}) and our assumption that $\lambda(\mathcal{E})>0,$ that there exists $L\in\mathbb{N}$ for which $\lambda(\mathcal{E}_{L})>0.$ This value of $L$ will be fixed for the rest of our proof.

 The proof of Theorem \ref{Proved theorem} will rely on an application of the Lebesgue density theorem. This theorem states that if $E\subseteq \mathbb{R}$ is a measurable set, then for almost every $x\in E$ the following holds
\begin{equation}
\label{density equation}
\lim_{r\to 0}\frac{\lambda(E\cap [x-r,x+r])}{2r}=1.
\end{equation} We call any $x\in E$ satisfying (\ref{density equation}) a \emph{density point for }$E$. To each $\beta\in \mathcal{I}$ we associate the set $$Bad_{d}(\beta,L):=\Big\{x\in Bad(\beta,L): x \textrm{ is a density point for } Bad(\beta,L)\Big\}.$$ Clearly $Bad_{d}(\beta,L)\subseteq Bad(\beta,L)$, the Lebesgue density theorem implies $\lambda(Bad_{d}(\beta,L))=\lambda( Bad(\beta,L)).$ Importantly if $\beta\in \mathcal{E}_{L}$ both of these quantities are positive.
Let $$A:=\int_{\mathcal{I}}\lambda(Bad_{d}(\beta,L))d\lambda(\beta).$$ It is a consequence of $\mathcal{E}_{L}$ having positive measure that $A>0.$ The following lemma constructs a $y\in[0,1]$ for which $y\in Bad_{d}(\beta,L)$ for a large subset of $\mathcal{I}.$ Before the statement and proof of this lemma we introduce a piece of notation, given $x\in[0,1]$ let $\widehat{Bad}(x,L):=\{\beta\in \mathcal{I}: x\in Bad_{d}(\beta,L)\}.$

\begin{lemma}
\label{Construction of useful y}
There exists $y\in[0,1]$ such that $\lambda(\widehat{Bad}(y,L))\geq A$.
\end{lemma}
\begin{proof}
This lemma is a straightforward consequence of Fubini's theorem but we include the details for completion. We observe the following:
\begin{align*}
A&=\int_{\mathcal{I}}\lambda(Bad_{d}(\beta,L))d\lambda(\beta)\\
&=\int_{\mathcal{I}}\int_{[0,1]}\mathbbm{1}_{Bad_{d}(\beta,L)}(x)d\lambda(x)d\lambda(\beta)\\
&=\int_{[0,1]}\int_{\mathcal{I}}\mathbbm{1}_{\widehat{Bad}(x,L)}(\beta)d\lambda(\beta)d\lambda(x)\textrm{  (By Fubini's theorem) }\\
&=\int_{[0,1]}\lambda(\widehat{Bad}(x,L))d\lambda(x).
\end{align*}

At least one $x\in[0,1]$ must satisfy $\lambda(\widehat{Bad}(x,L))\geq A.$ Otherwise $\int_{[0,1]}\lambda(\widehat{Bad}(x,L))d\lambda(x)<A,$ which is not possible.
\end{proof}

For the rest of our proof the value $y\in[0,1]$ will always denote the element we have constructed in Lemma \ref{Construction of useful y}. We now define a collection of subsets of $\widehat{Bad}(y,L),$ given $m\in\mathbb{N}$ let
\begin{align*}
\widehat{Bad}(y,L,m):=\Big\{\beta\in \widehat{Bad}(y,L):& \textrm{ For all }n\geq m \textrm{ we have } \\ &\frac{\lambda(Bad(\beta,L)\cap[y-\frac{1}{\beta^{n}},y+\frac{1}{\beta^{n}}])}{2\beta^{-n}}> 1-\frac{\lambda(A)}{20C_{1}}\Big\}.
\end{align*}
For each $\beta\in \widehat{Bad}(y,L)$ the element $y$ is a density point for $Bad(\beta,L)$, therefore $\widehat{Bad}(y,L)= \cup_{m=1}^{\infty}\widehat{Bad}(y,L,m).$ Since $\widehat{Bad}(y,L,m)\subseteq \widehat{Bad}(y,L,m+1),$ we can take $M\in\mathbb{N}$ sufficiently large that
\begin{equation}
\label{Uniform equation}
\lambda(\widehat{Bad}(y,L,M))\geq \lambda(A)/2.
\end{equation}This quantity $M$ will be fixed for the rest of our proof. The appearance of the quantity $\lambda(A)(20C_{1})^{-1}$ within the definition of $\widehat{Bad}(y,L,m)$ is due to technical reasons that will become clear when we finish our proof of Theorem \ref{Proved theorem}. Moreover the parameter $C_{1}$ is the same number appearing in the statement of Theorem \ref{Solomyak}.

We now define a new subset of $\mathcal{A}_{n}(\beta)$. Given $\beta\in(1,2)$, $s>0,$ and $n\in\mathbb{N}$, let $$T(\beta,s,n):=\Big\{a\in A_{n}(\beta): \exists b\in A_{n}(\beta) \textrm{ satisfying } a\neq b\textrm{ and } |a-b|\leq \frac{s}{2^{n}}\Big\}.$$ If $T(\beta,s,n)$ is a small set, then most elements of $A_{n}(\beta)$ will be separated by a factor $s\cdot 2^{-n}$. We will show that for certain values of $\beta$, $s$, and $n$ we have such a property, we will then use this property in our later calculations. To obtain bounds on the size of the set $T(\beta,s,n)$, we relate the size of $T(\beta,s,n)$ with the size of $P(\beta,s,n).$ We will then be able to use the machinery of Benjamini and Solomyak.
\begin{lemma}
\label{Counting bounds}
$\# T(\beta,s,n)\leq \# P(\beta,s,n).$
\end{lemma}
\begin{proof}
The proof of this statement is straightforward but we include it for completeness. Suppose $a\in T(\beta,s,n)$, then there exists $b\in A_{n}(\beta)$ such that $(a,b)\in P(\beta,s,n)$ and $(b,a)\in P(\beta,s,n).$ So if $a\in T(\beta,s,n)$ it appears as either the first or second coordinate of at least two elements of $P(\beta,s,n)$. The collection of numbers that appear as either the first or second coordinate of an element of $P(\beta,s,n)$ has cardinality $2\# P(\beta,s,n).$ Here we have allowed for a number to be counted multiple times. By the above each $a\in T(\beta,s,n)$ appears at least twice within this collection, therefore $2\# T(\beta,s,n)\leq 2\# P(\beta,s,n).$ This implies $\# T(\beta,s,n)\leq \# P(\beta,s,n).$
\end{proof}
In the following proposition $s$ is a fixed value. For this value of $s$ we will construct for each $n\in\mathbb{N}$ a useful $\beta\in\widehat{Bad}(y,L,M)$ for which $T(\beta,s,n)$ is small.

\begin{prop}
\label{Separation prop}
For each $n\in\mathbb{N}$ there exists $\beta_{n}\in \widehat{Bad}(y,L,M)$ such that $\# T(\beta_{n},\frac{\lambda(A)}{5C_{1}},n)< 2^{n-1}.$
\end{prop}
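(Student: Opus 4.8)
The plan is to prove the proposition by contradiction, converting the single-$\beta$ conclusion into an averaging statement over $\mathcal{I}$ that can be fed directly into the counting estimate of Theorem~\ref{Solomyak}. Throughout, fix $s:=\frac{\lambda(A)}{5C_{1}}$, the value appearing in the statement. Suppose that for our fixed $n$ no such $\beta_{n}$ exists; that is, \emph{every} $\beta\in\widehat{Bad}(y,L,M)$ satisfies $\#T(\beta,s,n)\geq 2^{n-1}$. By Lemma~\ref{Counting bounds} this immediately forces $\#P(\beta,s,n)\geq\#T(\beta,s,n)\geq 2^{n-1}$ for every such $\beta$.

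First I would integrate $\#P(\beta,s,n)/2^{n}$ over $\mathcal{I}$ and bound it below by discarding everything outside $\widehat{Bad}(y,L,M)$, which is legitimate since the integrand is nonnegative. On the remaining region the integrand is at least $2^{n-1}/2^{n}=1/2$, so
\begin{equation*}
\int_{\mathcal{I}}\frac{\#P(\beta,s,n)}{2^{n}}\,d\lambda(\beta)\geq\frac{1}{2}\,\lambda\big(\widehat{Bad}(y,L,M)\big)\geq\frac{1}{2}\cdot\frac{\lambda(A)}{2}=\frac{\lambda(A)}{4},
\end{equation*}
where the second inequality is the lower bound (\ref{Uniform equation}) on $\lambda(\widehat{Bad}(y,L,M))$. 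On the other hand, Theorem~\ref{Solomyak} applied with this same $s$ furnishes the competing upper bound
\begin{equation*}
\int_{\mathcal{I}}\frac{\#P(\beta,s,n)}{2^{n}}\,d\lambda(\beta)\leq C_{1}s=C_{1}\cdot\frac{\lambda(A)}{5C_{1}}=\frac{\lambda(A)}{5}.
\end{equation*}
Since $\lambda(A)=A>0$, the resulting inequality $\frac{\lambda(A)}{4}\leq\frac{\lambda(A)}{5}$ is impossible, and this contradiction produces the desired $\beta_{n}$. This also explains the otherwise mysterious factor $5C_{1}$ in the definition of $s$: it is calibrated precisely so that the measure lower bound $\lambda(A)/2$ from (\ref{Uniform equation}) strictly beats the Solomyak upper bound.

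The only point requiring care, rather than a genuine obstacle, is measurability: the displayed integrals only make sense once we know $\beta\mapsto\#P(\beta,s,n)$ is measurable on $\mathcal{I}$. This is immediate because for fixed $s$ and $n$ the function is integer valued and piecewise constant in $\beta$, changing value only at the finitely many $\beta$ at which some difference $|a-b|$ with $a,b\in A_{n}(\beta)$ crosses the threshold $s\cdot 2^{-n}$; each such $a,b$ is a continuous (indeed rational) function of $\beta$, so each crossing occurs at finitely many points. I would record this observation in one line before invoking Theorem~\ref{Solomyak}. Beyond this, the argument is a clean pigeonhole/averaging step, and all of the substantive work has already been carried out in the preceding setup, namely in fixing the constants $L$, $M$, and $s$ so that the two bounds collide.
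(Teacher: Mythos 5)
Your proof is correct and is essentially the paper's argument in contrapositive form: the paper bounds $\lambda(H_{n})$ for $H_{n}=\{\beta\in\mathcal{I}:\#T(\beta,\lambda(A)/5C_{1},n)\geq 2^{n-1}\}$ by Markov's inequality, obtaining $\lambda(H_{n})\leq 2\lambda(A)/5<\lambda(A)/2\leq\lambda(\widehat{Bad}(y,L,M))$, which is the same combination of Lemma \ref{Counting bounds}, Theorem \ref{Solomyak}, and (\ref{Uniform equation}) that you use, with the same calibration of the constant $5C_{1}$. Your arithmetic ($\lambda(A)/4\leq\lambda(A)/5$) and the measurability remark are both fine; no substantive difference.
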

\begin{proof}
In this proof we will obtain bounds on the Lebesgue measure of the set
$$H_{n}:=\Big\{\beta\in \mathcal{I}: \# T\Big(\beta, \frac{\lambda(A)}{5C_{1}},n\Big)
\geq 2^{n-1}\Big\}.$$ We will show that for any $n\in\mathbb{N}$ we have $\lambda(H_{n})<\lambda(\widehat{Bad}(y,L,M)),$ thus implying our result. In the following we make use of the estimates provided by Theorem \ref{Solomyak};
\begin{align*}
\lambda(H_{n})& \leq \lambda\Big(\beta\in\mathcal{I}: \# P\Big(\beta, \frac{\lambda(A)}{5C_{1}},n\Big)\geq 2^{n-1}\Big)\textrm{  (By Lemma } \ref{Counting bounds})\\
&= \frac{2^{n}\lambda\Big(\beta\in\mathcal{I}: \# P\Big(\beta, \frac{\lambda(A)}{5C_{1}},n\Big)\geq 2^{n-1}\Big)}{2^{n}}\\
&\leq 2\int_{\beta\in\mathcal{I}: \# P\Big(\beta, \frac{\lambda(A)}{5C_{1}},n\Big)\geq 2^{n-1}}\frac{\# P\Big(\beta, \frac{\lambda(A)}{5C_{1}},n\Big)}{2^{n}}d\lambda(\beta)\\
&\leq 2\int_{\mathcal{I}}\frac{\# P\Big(\beta, \frac{\lambda(A)}{5C_{1}},n\Big)}{2^{n}}d\lambda(\beta)\\
&\leq \frac{2C_{1}\lambda(A)}{5C_{1}} (\textrm{By Theorem }\ref{Solomyak})\\
&< \frac{\lambda(A)}{2}.
\end{align*}
By (\ref{Uniform equation}) we know that $\lambda(\widehat{Bad}(y,L,M))\geq \lambda(A)/2,$ so we can conclude our result.
\end{proof}
We emphasise that the $\beta_{n}$ we construct in Proposition \ref{Separation prop} depends on $n$. Also note that without loss of generality we may assume that $\beta_{n}$ is transcendental. We now have all the necessary tools to prove Theorem \ref{Proved theorem}

\begin{proof}[Proof of Theorem \ref{Proved theorem}]

We begin by taking $N\in\mathbb{N}$ to be some natural number sufficiently large that
\begin{equation}
\label{N large}
L\leq N+M,
\end{equation}and
\begin{equation}
\label{growth equation}
\frac{\lambda(A)2^{M}}{5C_{1}\beta^{M}}\leq \omega_{N+M}
\end{equation}for all $\beta\in\mathcal{I}.$ Recall that $M$ is the parameter appearing in (\ref{Uniform equation}). We can pick an $N$ that satisfies (\ref{growth equation}) because $(\omega_{n})_{n=1}^{\infty}$ tends to infinity. Applying Proposition \ref{Separation prop} with $N$ as above, there exists $\beta_{N}\in \widehat{Bad}(y,L,M),$ which we may assume to be transcendental, for which the set
$$S:=\{a\in A_{N}(\beta_{N}): \textrm{ such that }  |a-b|> \frac{\lambda(A)}{5C_{1}2^{N}} \textrm{ for all }b\in A_{n}(\beta_{N})\setminus\{a\}\Big\}$$ satisfies \begin{equation}
\label{S size}
\# S\geq 2^{N-1}.
 \end{equation}For each $a\in S$ there exists $(\epsilon_{i})_{i=1}^{N}\in\{0,1\}^{N}$ such that $(\beta_{N}-1)\sum \epsilon_{i}\beta_{N}^{-i}=a.$ Let $\{(\epsilon_{i}^{j})_{i=1}^{N}\}$ denote the set of these sequences. By (\ref{S size}) this set must contain at least $2^{N-1}$ elements.

 To each $(\epsilon_{i}^{j})_{i=1}^{N}$ we associate the interval
$$\mathcal{I}^{j}:=\Big[(\beta_{N}-1)\sum_{i=1}^{N} \frac{\epsilon_{i}^{j}}{\beta_{N}^{i}},(\beta_{N}-1)\sum_{i=1}^{N} \frac{\epsilon_{i}^{j}}{\beta_{N}^{i}} + \frac{\lambda(A)}{5C_{1}2^{N}}\Big].$$ Without loss of generality $\mathcal{I}^{j}\subseteq [0,1]$ for each $(\epsilon_{i}^{j})_{i=1}^{N}.$ Moreover, since each element of $S$ is separated by a factor of at least $\lambda(A)(5C_{1}2^{N})^{-1},$ we have $\mathcal{I}^{j}\cap \mathcal{I}^{j'}=\emptyset$ for each $j\neq j'.$

Each interval $\mathcal{I}^{j}$ is contained within the interval $[0,1],$ we have no information about how they are positioned except that they are all mutually disjoint. To achieve our desired contradiction we need to take these intervals and make them somehow local to $y$. This we do below.

Let $(\delta_{i})_{i=1}^{\infty}\in\{0,1\}^{\mathbb{N}}$ satisfy $y=(\beta_{N}-1)\sum_{i=1}^{\infty}\delta_{i}\beta_{N}^{-i}$. Such a sequence exists by (\ref{coding equation}). For any $m\in\mathbb{N}$ we have
\begin{equation}
\label{interval equation}
(\beta_{N}-1)\sum_{i=1}^{m}\frac{\delta_{i}}{\beta_{N}^{i}}\in\Big[y-\frac{1}{\beta_{N}^{m}},y\Big].
\end{equation} Given an interval $\mathcal{I}^{j}$ we now define the following local scaled interval
\begin{align*}
\tilde{\mathcal{I}}^{j}&:= (\beta_{N}-1)\sum_{i=1}^{M}\frac{\delta_{i}}{\beta_{N}^{i}} + \frac{\mathcal{I}^{j}}{\beta_{N}^{M}}\\
&=\Big[(\beta_{N}-1)\Big(\sum_{i=1}^{M}\frac{\delta_{i}}{\beta_{N}^{i}}+\sum_{i=1}^{N}\frac{\epsilon_{i}^{j}}{\beta_{N}^{M+i}}\Big),(\beta_{N}-1)\Big(\sum_{i=1}^{M}\frac{\delta_{i}}{\beta_{N}^{i}}+\sum_{i=1}^{N}\frac{\epsilon_{i}^{j}}{\beta_{N}^{M+i}}\Big)+\frac{\lambda(A)}{5C_{1}\beta_{N}^{M}2^{N}}\Big]
\end{align*} These intervals satisfy the following properties:
\begin{align}
\label{prop1}
\tilde{\mathcal{I}}^{j}\cap \tilde{\mathcal{I}}^{j'}=\emptyset \textrm{ for all } j\neq j',\\
\label{prop2}
\lambda(\tilde{\mathcal{I}}^{j})=\frac{\lambda(A)}{5C_{1}\beta_{N}^{M}2^{N}},\\
\label{prop3}
\tilde{\mathcal{I}}^{j}\subset [y-\beta_{N}^{-M},y+\beta_{N}^{-M}].
\end{align}
Equation (\ref{prop1}) follows from $\mathcal{I}^{j}\cap \mathcal{I}^{j'}=\emptyset$, equation (\ref{prop2}) is obvious, and equation (\ref{prop3}) is a consequence of equation (\ref{interval equation}). Moreover (\ref{growth equation}) implies that
$$\frac{\lambda(A)}{5C_{1}\beta_{N}^{M}2^{N}}\leq \frac{\omega_{N+M}}{2^{N+M}}.$$ Therefore, since $N+M\geq L$ we have
\begin{equation}
\label{complement inclusion}
\tilde{\mathcal{I}}^{j}\subseteq Bad^{c}(\beta_{N},L).
\end{equation}
Here $Bad^{c}(\beta_{N},L)$ is simply the complement of $Bad(\beta_{N},L)$. Combining the above properties we have the following estimates on the normalised Lebesgue measure of $Bad(\beta_{N},L)$ within the interval $[y-\beta_{N}^{-M},y+\beta_{N}^{-M}]:$

\begin{align*}
\frac{\lambda(Bad(\beta_{N},L)\cap [y-\beta_{N}^{-M},y+\beta_{N}^{-M}])}{2\beta_{N}^{-M}}&=1 - \frac{\lambda(Bad^{c}(\beta_{N},L)\cap [y-\beta_{N}^{-M},y+\beta_{N}^{-M}])}{2\beta_{N}^{-M}}\\
&\leq 1- \frac{\lambda(\bigcup \tilde{\mathcal{I}}^{j})}{2\beta_{N}^{-M}}\, (\textrm{By }(\ref{prop3})\textrm { and }(\ref{complement inclusion})) \\
& = 1- \frac{\sum\lambda(\tilde{\mathcal{I}}^{j})}{2\beta_{N}^{-M}}\, (\textrm{By }(\ref{prop1}))\\
& \leq  1- \frac{2^{N-1}\lambda(A)}{10C_{1}\beta_{N}^{M}2^{N}\beta_{N}^{-M}}(\textrm{By }(\ref{S size})\textrm { and }(\ref{prop2}))\\
&= 1- \frac{\lambda(A)}{20C_{1}}.
\end{align*}

But this contradicts the condition $\beta_{N}\in \widehat{Bad}(y,L,M).$ So we have our contradiction and have proved Theorem \ref{Proved theorem}.
\end{proof}

\begin{remark}
It is natural to ask why the proof of Theorem \ref{Proved theorem} cannot be extended to show that for almost every $\beta\in \mathcal{I}$ the set $V_{\beta}(2^{-n})$ is of full measure. We cannot prove this stronger statement as a consequence of the way we scale the intervals $\mathcal{I}^{j}$ to the local intervals $\tilde{\mathcal{I}}^{j}.$ The natural way to scale the intervals $\mathcal{I}^{j}$ is to use the expansion of $y,$ as we do in our proof our Theorem \ref{Proved theorem}. However, this method takes intervals of the order $c\cdot 2^{-N}$ and gives intervals of the order $c\cdot \beta^{-M}2^{-N}$ which are contained within an interval of size $d\cdot \beta^{-M}.$ To prove $V_{\beta}(2^{-n})$ is of full measure, we would need intervals of the order $c\cdot 2^{-(N+M)}$ which are within an interval of size $d\cdot 2^{-M}.$ In our proof of Theorem \ref{Proved theorem} we used the fact that $(w_{n})_{n=1}^{\infty}$ tends to infinity, to overcome the inefficiencies in the way we scale the intervals $\mathcal{I}^{j}$ to the local intervals $\tilde{\mathcal{I}}^{j}$.

\end{remark}
\begin{remark}
By Theorem \ref{Main theorem} we know that for a typical $\beta\in\mathcal{I}$ the set $W_{\beta}(\omega_{n}\cdot 2^{-n})$ is of full measure, but what can we say about the exceptional set $I_{\beta}\setminus W_{\beta}(\omega_{n}\cdot 2^{-n})$. As we will see, the following set is always contained in $I_{\beta}\setminus W_{\beta}(\omega_{n}\cdot 2^{-n}),$ whenever $(\omega_{n})_{n=1}^{\infty}$ grows sufficiently slowly. 

Let $$U_{\beta}:=\Big\{x\in \Big(0,\frac{1}{\beta-1}\Big):x \textrm{ has a unique }\beta\textrm{-expansion}\Big\}.$$ Understanding the properties of the set $U_{\beta}$ is a classical problem within expansions in non-integer bases. By the work of Dar\'{o}czy and K\'{a}tai \cite{DaKa}, and Erd\H{o}s, Jo\'{o} and Komornik \cite{Erdos2}, it is known that $U_{\beta}$ is nonempty if and only if $\beta\in(\frac{1+\sqrt{5}}{2},2).$ Moreover, in \cite{GlenSid} Glendinning and Sidorov showed that:
\begin{itemize}
  \item $U_{\beta}$ is countable if $\beta\in(\frac{1+\sqrt{5}}{2},\beta_{c})$,
  \item $U_{\beta_{c}}$ is uncountable with zero Hausdorff dimension,
  \item $U_{\beta}$ has strictly positive Hausdorff dimension if $\beta\in(\beta_{c},2)$.
\end{itemize}
Here $\beta_{c}\approx 1.78723$ is the the Komornik-Loreti constant introduced in \cite{KomLor}.

The connection between the set $U_{\beta}$ and the set $I_{\beta}\setminus W_{\beta}(\omega_{n}\cdot 2^{-n})$ is seen through the following inequalities. For each $x\in U_{\beta}$ there exists $\kappa(x)>0$ such that
\begin{equation}
\label{kappa equation}
\frac{\kappa(x)}{\beta^{n}(\beta-1)}\leq x-\sum_{i=1}^{n}\frac{\epsilon_{i}}{\beta^{i}}\leq \frac{1}{\beta^{n}(\beta-1)}
\end{equation}
for all $n\in\mathbb{N}$. Here $(\epsilon_{i})_{i=1}^{\infty}$ is the unique $\beta$-expansion of $x$. As a consequence of (\ref{kappa equation}), if $w_{n}\cdot 2^{-n}=o(\beta^{-n})$ then $U_{\beta}\subseteq I_{\beta}\setminus W_{\beta}(\omega_{n}\cdot 2^{-n}).$ Therefore, Theorem \ref{Main theorem} combined with the aforementioned results of Glendinning and Sidorov, imply that there exists $\beta\in(\beta_{c},2)$ for which $W_{\beta}(\omega_{n}\cdot 2^{-n})$ has full Lebesgue measure, yet $I_{\beta}\setminus W_{\beta}(\omega_{n}\cdot 2^{-n})$ is a set of positive Hausdorff dimension.
\end{remark}

\section{The convergence case and the mass transference principle principle}
In this section we discuss the case where $\sum_{n=1}^{\infty}2^{n}\Psi(n)<\infty$ and point out some consequences of the work presented here and in \cite{Bak}. To obtain these results we make use of the mass transference principle principle of Beresnevich and Velani \cite{BerVel}. Before we state the result from \cite{BerVel} that we use, we need to introduce some notation. Given a ball $B=B(x,r)$ contained in $\mathbb{R},$ let $B^{s}:=B(x,r^{s})$ where $s$ is any parameter within $(0,1)$. The following theorem is a version of Theorem 3 from \cite{BerVel} that we have rewrote to suit our purposes.

\begin{thm}
\label{Mass transference principle}
Let $(B_{i})_{i=1}^{\infty}$ be a sequence of balls in $I_{\beta}$ such that $r(B_{i})\to 0$. Suppose that for any ball $B\subseteq I_{\beta}$ we have $$\lambda\Big(B\bigcap \limsup B_{i}^{s}\Big)=\lambda(B).$$ Then for any ball $B\subseteq I_{\beta}$ we have $$\mathcal{H}^{s}\Big(B\bigcap \limsup B_{i}\Big)=\mathcal{H}^{s}(B).$$
\end{thm}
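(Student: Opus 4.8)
The plan is to deduce this statement directly from the general mass transference principle, Theorem 3 of \cite{BerVel}, by specialising the dimension function. Recall that the general principle concerns a dimension function $f$ and, for a ball $B=B(x,r)$ in $\mathbb{R}^{k}$, the rescaled ball $B^{f}:=B(x,f(r)^{1/k})$; under the hypothesis that $r^{-k}f(r)$ is monotonic, it asserts that if $\limsup_{i} B_{i}^{f}$ has full $\mathcal{H}^{k}$-measure in every ball $B$, then $\limsup_{i} B_{i}$ has full $\mathcal{H}^{f}$-measure in every ball, provided the radii $r(B_{i})$ tend to zero.

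First I would set $k=1$, since all our balls lie in $I_{\beta}\subseteq\mathbb{R}$, and choose the dimension function $f(x)=x^{s}$ for the fixed $s\in(0,1)$. With this choice $B^{f}=B(x,(r^{s})^{1/1})=B(x,r^{s})=B^{s}$, so the rescaled balls appearing in the general principle coincide exactly with the balls $B_{i}^{s}$ of our statement, and $\mathcal{H}^{f}=\mathcal{H}^{s}$ is the $s$-dimensional Hausdorff measure. I would then verify the monotonicity hypothesis: here $r^{-k}f(r)=r^{-1}r^{s}=r^{s-1}$, which is monotonic (decreasing) precisely because $s<1$, so $f$ is admissible; the radius hypothesis $r(B_{i})\to 0$ is assumed outright.

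It then remains only to reconcile the two notions of full measure. On $\mathbb{R}$ the one-dimensional Hausdorff measure $\mathcal{H}^{1}$ coincides with Lebesgue measure $\lambda$, so the hypothesis $\lambda(B\cap\limsup_{i} B_{i}^{s})=\lambda(B)$ is exactly the assertion that $\limsup_{i} B_{i}^{f}$ has full $\mathcal{H}^{k}$-measure in $B$, and the conclusion of the general principle, $\mathcal{H}^{f}(B\cap\limsup_{i} B_{i})=\mathcal{H}^{f}(B)$, is exactly $\mathcal{H}^{s}(B\cap\limsup_{i} B_{i})=\mathcal{H}^{s}(B)$.

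The only points requiring care --- the mildest form of an obstacle --- are confirming that the ambient set may harmlessly be taken to be the interval $I_{\beta}$ rather than all of $\mathbb{R}$, which is legitimate because the mass transference principle is a purely local statement about balls and every relevant ball is contained in $I_{\beta}$, and checking that the identification $\mathcal{H}^{1}=\lambda$ carries the correct normalisation. Neither presents any genuine difficulty, so the theorem follows immediately from \cite{BerVel} once the dimension function is specialised as above.
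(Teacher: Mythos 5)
Your proposal is correct and matches the paper exactly: the paper gives no proof of this theorem, stating only that it is ``a version of Theorem 3 from \cite{BerVel} that we have rewrote to suit our purposes,'' and your specialisation (taking $k=1$, the dimension function $f(r)=r^{s}$ so that $B^{f}=B^{s}$ and $\mathcal{H}^{f}=\mathcal{H}^{s}$, checking monotonicity of $r^{-1}f(r)=r^{s-1}$, and identifying $\mathcal{H}^{1}$ with $\lambda$) is precisely the reduction the author leaves implicit.
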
Here $\mathcal{H}^{s}$ is the $s$-dimensional Hausdorff measure. A version of Theorem \ref{Mass transference principle} holds for more general dimension functions, in this article we focus only on Hausdorff measure and Hausdorff dimension.

Combining Theorem \ref{Mass transference principle} with Theorem \ref{Garsia thm} we have the following result.

\begin{thm}
\label{Hausdorff Garsia}
Let $\beta\in(1,2)$ be a Garsia number and $\Psi:\mathbb{N}\to\mathbb{R}_{\geq 0}$ satisfy $\Psi(n)\to 0$. Then the following statements are true:
$$\sum_{n=1}^{\infty}2^{n}\Psi(n)^{s}<\infty \Rightarrow \mathcal{H}^{s}(W_{\beta}(\Psi))=0$$ and
$$\sum_{n=1}^{\infty}2^{n}\Psi(n)^{s}=\infty \Rightarrow \mathcal{H}^{s}(W_{\beta}(\Psi))=\infty.$$
\end{thm}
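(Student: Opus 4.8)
The plan is to prove the two implications separately. The convergence implication follows from a direct covering bound, while the divergence implication will be obtained by feeding the approximation regularity of Garsia numbers (Theorem \ref{Garsia thm}) into the mass transference principle (Theorem \ref{Mass transference principle}). Throughout I assume $s\in(0,1)$, which is forced in the divergence case since $\mathcal{H}^{s}$ of a bounded subset of $\mathbb{R}$ is finite whenever $s\geq 1$, and could therefore never equal $\infty$.

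For the convergence case I would use the tautological cover of the limsup set. For each $m$ one has $W_{\beta}(\Psi)\subseteq\bigcup_{n=m}^{\infty}\bigcup_{(\epsilon_{i})_{i=1}^{n}}[\sum_{i=1}^{n}\epsilon_{i}\beta^{-i},\sum_{i=1}^{n}\epsilon_{i}\beta^{-i}+\Psi(n)]$, a collection of at most $2^{n}$ intervals of diameter $\Psi(n)$ at each level $n$. Since $\Psi(n)\to 0$, these are $\delta$-covers with $\delta=\delta(m)\to 0$, so $\mathcal{H}^{s}_{\delta}(W_{\beta}(\Psi))\leq\sum_{n=m}^{\infty}2^{n}\Psi(n)^{s}$. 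Letting $m\to\infty$, the right-hand side is the tail of a convergent series and hence vanishes, giving $\mathcal{H}^{s}(W_{\beta}(\Psi))=0$.

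For the divergence case I would index a family of balls by the pairs $(n,(\epsilon_{i})_{i=1}^{n})$ with $\Psi(n)>0$. Writing $c=\sum_{i=1}^{n}\epsilon_{i}\beta^{-i}$, take $B=B(c+\Psi(n)/2,\Psi(n)/2)$, so that $B=[c,c+\Psi(n)]$ is exactly the approximating interval in $W_{\beta}(\Psi)$; ordering these balls by increasing $n$ forces $r(B_{i})\to 0$ and gives $\limsup B_{i}=W_{\beta}(\Psi)$. The enlarged ball is $B^{s}=B(c+\Psi(n)/2,(\Psi(n)/2)^{s})$. Because $\Psi(n)/2\in(0,1)$ for large $n$ and $s<1$, we have $(\Psi(n)/2)^{s}>\Psi(n)/2$, whence $B^{s}\supseteq[c,c+(\Psi(n)/2)^{s}]$. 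Passing to the limsup yields $\limsup B_{i}^{s}\supseteq W_{\beta}(\hat{\Psi})$ where $\hat{\Psi}(n)=(\Psi(n)/2)^{s}=2^{-s}\Psi(n)^{s}$.

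Now $\sum_{n=1}^{\infty}2^{n}\hat{\Psi}(n)=2^{-s}\sum_{n=1}^{\infty}2^{n}\Psi(n)^{s}=\infty$, so since $\beta$ is a Garsia number Theorem \ref{Garsia thm} shows $W_{\beta}(\hat{\Psi})$ has full Lebesgue measure in $I_{\beta}$; hence $\limsup B_{i}^{s}$ has full measure, which yields $\lambda(B\cap\limsup B_{i}^{s})=\lambda(B)$ for every ball $B\subseteq I_{\beta}$. Theorem \ref{Mass transference principle} then gives $\mathcal{H}^{s}(B\cap\limsup B_{i})=\mathcal{H}^{s}(B)$ for every such $B$; taking $B=I_{\beta}$ and using $\mathcal{H}^{s}(I_{\beta})=\infty$ for $s<1$ produces $\mathcal{H}^{s}(W_{\beta}(\Psi))=\infty$. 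The step I expect to be most delicate is reconciling the one-sided intervals defining $W_{\beta}$ with the centered balls demanded by the mass transference principle: the balls must be centered so that $\limsup B_{i}$ is \emph{exactly} $W_{\beta}(\Psi)$, yet the off-center enlargement $B_{i}^{s}$ must still contain a one-sided interval of the form $[c,c+\hat{\Psi}(n)]$ so that approximation regularity applies verbatim. The inequality $(\Psi(n)/2)^{s}>\Psi(n)/2$, valid precisely because $s<1$ and $\Psi(n)\to 0$, is exactly what makes the enlargement swallow the relevant interval, and is the hinge connecting the two cited theorems.
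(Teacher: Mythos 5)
Your proposal is correct and follows exactly the route the paper indicates (the paper only sketches this proof in two sentences: a covering argument for the convergence half and the mass transference principle combined with Theorem \ref{Garsia thm} for the divergence half), and your handling of the one-sided intervals versus centered balls, via the inequality $(\Psi(n)/2)^{s}\geq\Psi(n)/2$ for $s<1$ and $\Psi(n)$ small, correctly supplies the detail the paper leaves implicit.
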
The first part of Theorem \ref{Hausdorff Garsia} follows from a covering argument. The second statement is where we use the mass transference principle. As an application of Theorem \ref{Hausdorff Garsia} we obtain the following result.

\begin{thm}
\label{root 2 theorem}
Let $\alpha\in(1,\infty)$ and $\Psi(n)=2^{-n\alpha},$ then $\mathcal{H}^{1/\alpha}(W_{\sqrt{2}}(\Psi))=\infty.$
\end{thm}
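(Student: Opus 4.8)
The plan is to deduce Theorem \ref{root 2 theorem} directly from Theorem \ref{Hausdorff Garsia}, so the only genuine task is to check that $\sqrt{2}$ qualifies as a Garsia number and then to evaluate the relevant dichotomy series. First I would verify the hypothesis: the minimal polynomial of $\sqrt{2}$ is $x^{2}-2$, whose norm is $-2$ and whose only conjugate is $-\sqrt{2}$, of modulus $\sqrt{2}>1$. Hence $\sqrt{2}$ is a positive real algebraic integer with norm $\pm 2$ all of whose conjugates lie strictly outside the unit circle, which is exactly the definition of a Garsia number. Since $\Psi(n)=2^{-n\alpha}\to 0$, the hypotheses of Theorem \ref{Hausdorff Garsia} are met with $\beta=\sqrt{2}$.

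Next I would compute the series governing the Hausdorff measure dichotomy at the exponent $s=1/\alpha$. Since $\alpha\in(1,\infty)$ we have $s=1/\alpha\in(0,1)$, so this is an admissible value for the $B^{s}$ construction underlying the mass transference principle (Theorem \ref{Mass transference principle}). For this $s$ one has $\Psi(n)^{s}=\big(2^{-n\alpha}\big)^{1/\alpha}=2^{-n}$, whence
$$\sum_{n=1}^{\infty}2^{n}\Psi(n)^{s}=\sum_{n=1}^{\infty}2^{n}\cdot 2^{-n}=\sum_{n=1}^{\infty}1=\infty.$$
Applying the second implication of Theorem \ref{Hausdorff Garsia} then yields $\mathcal{H}^{1/\alpha}(W_{\sqrt{2}}(\Psi))=\infty$, as required.

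There is no serious obstacle remaining: all of the analytic content---the absolute continuity of the Bernoulli convolution $\mu_{\sqrt{2}}$ via Garsia's theorem, the full-measure statement of Theorem \ref{Garsia thm}, and the transference from Lebesgue measure to Hausdorff measure supplied by Theorem \ref{Mass transference principle}---is already packaged inside Theorem \ref{Hausdorff Garsia}. The present statement is simply the observation that the critical exponent for $\Psi(n)=2^{-n\alpha}$ is precisely $1/\alpha$, the exponent at which the dichotomy series diverges. It is worth remarking that this simultaneously identifies $\dim_{H}(W_{\sqrt{2}}(\Psi))=1/\alpha$ and shows the $1/\alpha$-dimensional Hausdorff measure to be infinite, which refines (by pinning down the measure) the upper bound $\dim_{H}(K_{\beta}(\Psi))\leq 1/\alpha$ recorded in Theorem \ref{PerRev thm}, consistent with the inclusion $W_{\beta}(\Psi)\subseteq K_{\beta}(\Psi)$.
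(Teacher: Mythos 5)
Your proposal is correct and follows exactly the paper's route: the paper likewise deduces the result from Theorem \ref{Hausdorff Garsia} via the single remark that $\sqrt{2}$ is a Garsia number, with the divergence of $\sum 2^{n}\Psi(n)^{1/\alpha}$ left implicit. Your verification of the Garsia property and the computation $\Psi(n)^{1/\alpha}=2^{-n}$ simply make explicit what the paper takes for granted.
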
This follows since $\sqrt{2}$ is a Garsia number. Note that Theorem \ref{root 2 theorem} provides an explicit example of a $\beta$ which satisfies the almost everywhere statement appearing in Theorem \ref{PerRev thm}. Theorem \ref{root 2 theorem} also yields the Hausdorff measure at the dimension, something which is not covered by Persson and Reeve. Importantly our results do not imply the intersection properties included in Theorem \ref{PerRev thm}.

Similarly, if we combine Theorem \ref{Mass transference principle} with Theorem \ref{Main theorem} we obtain the following weaker version of Theorem \ref{PerRev thm}.

\begin{thm}
\label{Hausdorff ae}
For almost every $\beta\in(1.49\ldots,2),$ we have that $\dim_{H}(W_{\beta}(2^{-n\alpha}))=1/\alpha$ for any $\alpha\in(1,\infty).$
\end{thm}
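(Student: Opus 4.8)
The plan is to establish the two inequalities $\dim_{H}(W_{\beta}(2^{-n\alpha}))\le 1/\alpha$ and $\dim_{H}(W_{\beta}(2^{-n\alpha}))\ge 1/\alpha$ separately. The upper bound is immediate and in fact holds for every $\beta\in(1,2)$: since $W_{\beta}(\Psi)\subseteq K_{\beta}(\Psi)$ and $\Psi(n)=2^{-n\alpha}$, the first item of Theorem \ref{PerRev thm} gives $\dim_{H}(W_{\beta}(2^{-n\alpha}))\le \dim_{H}(K_{\beta}(2^{-n\alpha}))\le 1/\alpha$. Thus the real content of the theorem is the lower bound, which I would extract from the mass transference principle of Theorem \ref{Mass transference principle}.

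For the lower bound I would set up the balls as follows. Enumerate all pairs $(n,(\epsilon_{i})_{i=1}^{n})$ and, to the defining interval $[p_{n},p_{n}+2^{-n\alpha}]$ with $p_{n}=\sum_{i=1}^{n}\epsilon_{i}\beta^{-i}$, associate the ball $B=B(p_{n}+2^{-n\alpha-1},2^{-n\alpha-1})$. A point $x$ lies in $B$ precisely when $0\le x-p_{n}\le 2^{-n\alpha}$, so with this indexing $\limsup B_{i}=W_{\beta}(2^{-n\alpha})$ and $r(B_{i})\to 0$. Fixing $\eta\in(0,1)$ and taking $s=(1-\eta)/\alpha\in(0,1)$, the shrunk ball $B_{i}^{s}$ has radius $(2^{-n\alpha-1})^{s}\asymp 2^{-n(1-\eta)}$. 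A short computation, using $s<1$ so that the centre shift $2^{-n\alpha-1}$ is negligible compared with the radius $2^{-n(1-\eta)}$, shows that $B_{i}^{s}$ contains every $x$ with $0\le x-p_{n}\le c\,2^{-n(1-\eta)}$ for a suitable constant $c>0$ and all large $n$; hence $W_{\beta}(c\,2^{-n(1-\eta)})\subseteq \limsup B_{i}^{s}$.

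The key point is that $c\,2^{-n(1-\eta)}=(c\,2^{n\eta})\cdot 2^{-n}$ with $c\,2^{n\eta}\to\infty$, so by Theorem \ref{Main theorem} the set $W_{\beta}(c\,2^{-n(1-\eta)})$ has full Lebesgue measure in $I_{\beta}$ for almost every $\beta\in(1.49\ldots,2)$. Consequently $\limsup B_{i}^{s}$ has full measure, and for any ball $B\subseteq I_{\beta}$ we have $\lambda(B\cap\limsup B_{i}^{s})=\lambda(B)$. Theorem \ref{Mass transference principle} then yields $\mathcal{H}^{s}(W_{\beta}(2^{-n\alpha}))=\mathcal{H}^{s}(B)$, which is infinite for $B=I_{\beta}$ since $s<1$; in particular $\dim_{H}(W_{\beta}(2^{-n\alpha}))\ge s=(1-\eta)/\alpha$. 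Letting $\eta\to 0$ gives $\dim_{H}(W_{\beta}(2^{-n\alpha}))\ge 1/\alpha$, and combined with the upper bound this proves equality.

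Finally I would address the order of quantifiers, which is the one genuinely delicate bookkeeping step. The full-measure set of $\beta$ produced by Theorem \ref{Main theorem} depends on the approximating function, hence on $\eta$, but crucially \emph{not} on $\alpha$. So I would fix the countable family $\eta_{k}=1/k$, obtain for each $k$ a full-measure set $G_{k}\subseteq(1.49\ldots,2)$ on which $W_{\beta}(c\,2^{-n(1-\eta_{k})})$ is of full measure, and set $G=\bigcap_{k}G_{k}$, which is still of full measure. For every $\beta\in G$ the argument above runs simultaneously for all $\alpha\in(1,\infty)$ and all $k$, giving $\dim_{H}(W_{\beta}(2^{-n\alpha}))\ge(1-1/k)/\alpha$ for every $k$, hence $\dim_{H}(W_{\beta}(2^{-n\alpha}))=1/\alpha$ for every $\alpha$. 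The main obstacle, and the reason the parameter $\eta$ cannot be dropped, is exactly the borderline phenomenon flagged in the remark after Theorem \ref{Proved theorem}: taking $s=1/\alpha$ directly would require $W_{\beta}(2^{-n})$ to have full measure, which Theorem \ref{Main theorem} does not provide; perturbing to $s=(1-\eta)/\alpha$ converts the borderline scale $2^{-n}$ into the scale $2^{-n(1-\eta)}$, which lies comfortably inside the regime covered by Theorem \ref{Main theorem}.
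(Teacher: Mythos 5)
Your proposal is correct and follows essentially the same route the paper sketches for Theorem \ref{Hausdorff ae}: combine Theorem \ref{Main theorem} (applied to a suitable slowly varying sequence, here $\omega_{n}=c\cdot 2^{n\eta}$ in place of the paper's $\log n$) with the mass transference principle, at an exponent $s=(1-\eta)/\alpha$ strictly below $1/\alpha$, and then let $\eta\to 0$ along a countable sequence to handle the quantifiers. Your closing observation that the perturbation $\eta>0$ is unavoidable, and that this is why the argument yields the dimension but not the Hausdorff measure at the dimension, matches the paper's own remark exactly.
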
We omit the proof of Theorem \ref{Hausdorff ae}. The proof follows from making a sensible choice of $(\omega_{n})_{n=1}^{\infty}$ which grows sufficiently slowly, $(\log n)_{n=1}^{\infty}$ works. We then combine Theorem \ref{Main theorem} and Theorem \ref{Mass transference principle}. Unfortunately this method does not provide any information on the Hausdorff measure at the dimension. A proof of Conjecture \ref{conjecture 1} would imply that for almost every $\beta\in(1,2)$ we have an analogue of Theorem \ref{Hausdorff Garsia}.
\smallskip

\noindent \textbf{Acknowledgements} Part of this paper was completed during the author's visit to LIAFA. The author is grateful to Wolfgang Steiner for providing an enjoyable working environment during this visit. The author would also like to thank Karma Dajani and Tom Kempton for being a good source of discussion. Additional thanks should go to Tom Kempton for introducing us to \cite{Solomyak1} and for posing several interesting questions.


\begin{thebibliography}{100}
\bibitem{Bak} S. Baker, \textit{Approximation properties of $\beta$-expansions,} Acta Arith. 168 (2015), 269--287
\bibitem{Solomyak1} I. Benjamini, B. Solomyak, \textit{Spacings and pair correlations for finite Bernoulli convolutions,} Nonlinearity 22 (2009), no. 2, 381–-393.
\bibitem{BerVel} V. Beresnevich, S. Velani, \textit{A mass transference principle and the Duffin-Schaeffer conjecture for Hausdorff measures,} Ann. of Math. (2) 164 (2006), no. 3, 971-–992.
\bibitem{Daj} K. Dajani, V. Komornik, P. Loreti, M. de Vries, \textit{Optimal expansions in non-integer bases,} Proc. Amer. Math. Soc. 140 (2012), no. 2, 437--447.
\bibitem{DaKa}  Z. Dar\'{o}czy, I. K\'{a}tai, \textit{Univoque sequences}, Publ. Math. Debrecen {\bf 42} (1993),  397--407.
\bibitem{DufSch} R. J. Duffin, A. C. Schaeffer, \textit{Khintchine's problem in metric Diophantine approximation,} Duke Math. J., 8 (1941). 243-–255.
\bibitem{Erdos2} P. Erd\H{o}s, I. Jo\'{o}, V. Komornik, \textit{Characterization of the unique expansions $1 =\sum_{i=1}^{\infty}q^{-n_{i}}$ and
related problems}, Bull. Soc. Math. Fr. {\bf 118} (1990), 377--390.
\bibitem{Falconer} K. Falconer, \textit{Sets with large intersection properties,}  J. London Math. Soc. (2) 49 (1994), no. 2, 267-–280.
\bibitem{Gar} A. Garsia, \textit{Arithmetic properties of Bernoulli convolutions,} Trans. Amer. Math. Soc. 102 1962 409–-432.
\bibitem{GlenSid} P. Glendinning, N. Sidorov, \textit{Unique representations of real numbers in non-integer bases}, Math. Res. Letters {\bf 8} (2001), 535--543.
\bibitem{Hoc} M. Hochman, \emph{On self-similar sets with overlaps and inverse theorems for entropy,} Ann. of Math. (2) 180 (2014), no. 2, 773–-822.
\bibitem{Khit} A. Khintchine, \textit{Einige S\"atze \"uber Kettenbruche, mit Anwendungen auf die Theorie der Diophantischen Approximationen,} Math. Ann. 92
(1924), 115–125.
\bibitem{Kom} V. Komornik, \textit{Expansions in non-integer bases,} Integers 11B (2011), Paper No. A9, 30 pp. 11A63.
\bibitem{KomLor} V. Komornik and P. Loreti, Unique developments in non-integer bases, Amer. Math. Monthly 105 (1998), no. 7, 636--639.
\bibitem{Parry} W. Parry, \textit{On the $\beta$-expansions of real numbers}, Acta Math. Acad. Sci. Hung. {\bf 11} (1960) 401--416.
\bibitem{PerRev} T. Persson, H. Reeve, \textit{A Frostman type lemma for sets with large intersections, and an application to Diophantine approximation,}  Proceedings of the Edinburgh Mathematical Society, Volume 58, Issue 02, June 2015, 521–-542.
\bibitem{PerRevA} T. Persson, H. Reeve, \textit{On the Diophantine properties of $\lambda$-expansions,}  Mathematika, volume 59 (2013), issue 1, 65-–86.
\bibitem{Renyi} A. R\'{e}nyi, \textit{Representations for real numbers and their ergodic properties}, Acta Math. Acad. Sci. Hung. {\bf 8} (1957) 477--493.
\bibitem{Shm} P. Shmerkin, \textit{On the exceptional set for absolute continuity of Bernoulli convolutions,} Geom. Funct. Anal. 24 (2014), no. 3, 946-–958.
\bibitem{Sid} N. Sidorov, \textit{Almost every number has a continuum of $\beta$-expansions,} Amer. Math. Monthly 110 (2003), no. 9, 838–-842.
\bibitem{Sid2} N. Sidorov, \textit{Arithmetic dynamics,} Topics in dynamics and ergodic theory, 145-–189, London Math. Soc. Lecture Note Ser., 310, Cambridge Univ. Press, Cambridge, 2003.
\bibitem{Solomyak} B. Solomyak, \textit{On the random series $\sum \pm \lambda^{n}$ (an Erd\H{o}s problem)},  Ann. of Math. (2) 142 (1995), no. 3, 611–-625.
\end{thebibliography}
\end{document}